\documentclass{article}
\usepackage[all,ps,arc]{xy}  
\usepackage{amsmath,amssymb,latexsym}
\usepackage{amsfonts}
\usepackage{amsthm}
%
%
%
%
%
%
%
%

\newcommand{\point}{\ensuremath{\xymatrix{A\ar@<+.6ex>[r]^(.5){\alpha}
&B\ar@<+.6ex>[l]^(.5){\beta}}}}
\newcommand{\rg}{\ensuremath{\xymatrix{A\ar@<+1ex>[r]^{\alpha}\ar@<-1ex>[r]_{\gamma}&B\ar[l]|{\beta}}}}

\newtheorem{Theorem}{Theorem}[section]
\newtheorem{Lemma}[Theorem]{Lemma}
\newtheorem{Proposition}[Theorem]{Proposition}
\newtheorem{Definition}[Theorem]{Definition}

\newtheorem{Corollary}[Theorem]{Corollary}

\theoremstyle{remark}\newtheorem{nsc}{}[section]
\theoremstyle{remark}\newtheorem{Remark}[Theorem]{\bf Remark}
\theoremstyle{remark}\newtheorem{Example}[Theorem]{\bf Example}


\setcounter{secnumdepth}{2}

%
%

\newcommand{\bA}{{\mathbb A}}
\newcommand{\bB}{{\mathbb B}}

\newcommand{\bN}{{\mathbb N}}

\newcommand{\bX}{{\mathbb X}}

\newcommand{\cA}{{\mathcal A}}
\newcommand{\cB}{{\mathcal B}}
\newcommand{\cC}{{\mathcal C}}

\newcommand{\cF}{{\mathcal F}}
\newcommand{\cE}{{\mathcal E}}

\newcommand{\cK}{{\mathcal K}}

\newcommand\Pt{\mathsf{Pt}}
\newcommand\RGph{\mathsf{RGph}}
\newcommand\RRel{\mathsf{RRel}}
\newcommand\EqRel{\mathsf{EqRel}}
\newcommand\EffRel{\mathsf{EffRel}}
\newcommand\Set{\mathsf{Set}}
\newcommand\Gpd{\mathsf{Gpd}}
\newcommand\Gp{\mathsf{Gp}}
\newcommand\Top{\mathsf{Top}}

\newcommand\K{\mathsf{K}}
\newcommand\coker{\mathsf{coker}}
\newcommand\Rel{\mathrm{Rel}}
\newcommand\Nor{\mathrm{Nor}}

\begin{document}

\def \tm{\!\times\!}

\newenvironment{changemargin}[2]{\begin{list}{}{
\setlength{\topsep}{0pt}
\setlength{\leftmargin}{0pt}
\setlength{\rightmargin}{0pt}
\setlength{\listparindent}{\parindent}
\setlength{\itemindent}{\parindent}
\setlength{\parsep}{0pt plus 1pt}
\addtolength{\leftmargin}{#1}\addtolength{\rightmargin}{#2}
}\item}{\end{list}}

\title{Bourn-normal monomorphisms\\
in regular Mal'tsev categories}
\author{Giuseppe Metere}

\maketitle

\begin{abstract}
Normal monomorphisms in the sense of Bourn describe the equivalence classes of an internal equivalence relation. Although the definition is given in the fairly general setting of a category with finite limits, later investigations on this subject often focus on protomodular settings, where normality becomes a property.
This paper clarifies the connections between internal equivalence relations and Bourn-normal monomorphisms in regular Mal'tesv categories with pushouts of split monomorphisms along arbitrary morphisms, whereas a full description is achieved for quasi-pointed regular Mal'tsev categories with pushouts of split monomorphisms along arbitrary morphisms.
\end{abstract}

\section{Introduction}
In \cite{Bo00b}, Bourn introduces a notion of normal monomorphism that translates in categorical terms the set-theoretical notion of \emph{equivalence class} of a given equivalence relation. Although Bourn gives his definition with minimal assumptions, the main theme of the cited article is developed in the pointed protomodular context, where equivalence relations are completely determined by their zero classes.
Indeed, for protomodular categories, the author shows that when a subobject is Bourn-normal to an equivalence relation, then the equivalence relation is essentially unique. However, he does not provide any procedure in order to determine such a relation. Borceux highlighted the problem in \cite{Borceux04}, where he writes: \emph{unfortunately, there is no known general construction of this equivalence relation, given a subobject ``candidate to be normal''}.

The present paper originates from the aim to clarify these issues in a context possibly broader than that of protomodular categories.

Indeed, our recipe for the construction of the equivalence relation associated with a given Bourn-normal monomorphism (Proposition \ref{prop:Rel(n)}) relies on a more general result concerning the induced bifibration on an epi-reflective subcategory of a given bifibred category (see Corollary \ref{coroll:bifib_reflection}). Actually, for a Bourn-normal monomorphism $n$, an  equivalence relation $\Rel(n)$  to which $n$ is Bourn-normal is obtained using an opcartesian lift of $n$ with respect to the functor that sends an equivalence relation on $X$, to the base  object $X$. In fact, when considered as an internal functor, this lift happens to be also cartesian, and a discrete opfibration.

We will show that the same fact holds in a regular Mal'tsev category, where, in general,  the equivalence relation to which $n$ is Bourn-normal is not unique. Nonetheless, $\Rel(n)$ is still special, in some sense: it is the smallest equivalence relation in the lattice of all equivalence relations that have $n$ as one of their classes. Finally, from the construction of the functor $\Rel$, we get a characterization of Bourn-normal monomorphisms that allows us to remove the existential quantifier from Definition \ref{def:normal} (see Corollary \ref{cor:no_exists}).

A brief description of the contents of this paper follows: Section 2 is devoted to setting up the necessary preliminaries; in Section 3, after recalling the definition of Bourn-normal monomorphism, Bourn's normalization is extended to the quasi-pointed setting; Section 4 focuses on the more general problem of the induced bifibration on an epi-reflective subcategory, while, in Section 5, this is applied to reflexive graphs and relations in order to associate an equivalence relation with a given Bourn-normal monomorphism;  Section 6 gathers the results of Section 3 and of Section 5: in the quasi-pointed Mal'tsev regular setting, Theorem \ref{thm:The_Theorem}  gives a general description of the relationship between Bourn-normal monomorphisms and internal equivalence relations; the last Section presents some algebraic examples.

\section{Preliminaries}\label{sec:preliminaries}
In this section, we recall some basic concepts from \cite{BB}, and fix notation. The reader may consult \cite{Borceux94.2} for the definition of fibration, opfibration and related notions (in [loc.\ cit.]  \emph{op}fibrations are called \emph{co}fibrations). Throughout the section we shall assume that $\cC$ is a category with finite limits.

\begin{nsc}
We denote by $\Pt(\cC)$ the category with objects the four-tuples $(B,A,b,s)$ in $\cC$, with $b\colon B\to A$ and $b\cdot s=1_A$, and with morphisms $(f,g)\colon (D,C,d,s)\to (B,A,b,s)$:
\begin{equation}\label{diag:split_epi_morphism}
\begin{aligned}\xymatrix{
D\ar@<-.5ex>[d]_{d}\ar[r]^{f}
&B\ar@<-.5ex>[d]_{b}
\\
C\ar@<-.5ex>[u]_{s}\ar[r]_g
&A\ar@<-.5ex>[u]_{s}}
\end{aligned}
\end{equation}
such that both the upward and the downward directed squares commute.
\end{nsc}
\begin{nsc}
The assignment  $(B,A,b,s)\mapsto A$ gives rise to a fibration, the so called \emph{fibration of points}:
$$
\cF\colon \Pt(\cC)\to\cC.
$$
For an object $A$ of $\cC$, we denote by $\Pt_A(\cC)$ the fibre of $\cF$ over $A$.
$\cF$-cartesian morphisms are morphisms $(f,g)$ in $\Pt(\cC)$  such that the downward directed square of diagram (\ref{diag:split_epi_morphism}) is a pullback.
In this way, any morphism $g\colon C\to A$ defines a ``change of base'' functor $g^*\colon \Pt_A(\cC)\to\Pt_C(\cC)$.

Since the category $\cC$ is  finitely complete, also the fibres $\Pt_A(\cC)$ are, and every change of base functor is left exact.
\end{nsc}
\begin{nsc}
The category $\cC$ is called \emph{protomodular} when every change of base of the fibration of points is conservative, i.e.\ when they reflect  isomorphisms (see \cite{BB}).
\end{nsc}
\begin{nsc}
When  $\cC$  admits an initial object $0$, for any object $A$ of $\cC$, one can consider the change of base along the initial arrow $!_A\colon 0\to A$. This defines a \emph{kernel functor} $\cK_A$, for every object $A$. In the presence of an initial object, the protomodularity condition can be equivalently defined by requiring that just kernel functors be conservative.
\end{nsc}
\begin{nsc}
In presence of initial and final objects, a category $\cC$ is called quasi-pointed when the unique arrow $0\to 1$ is a
monomorphism. If this is the case, the domain functor $\Pt_0(\cC)\to \cC$  defines an embedding. Its image is the subcategory $\cC_0$ spanned by objects with null support (i.e.\ objects $A$ equipped with a necessarily unique arrow $\omega_A\colon A\to 0$), and we have a factorization:
$$
\cK_A\colon \Pt_A(\cC)\to\cC_0\hookrightarrow\cC\,.
$$
When $0\to 1$ is an isomorphism, we say that $\cC$ is pointed; if this is the case, clearly $\cC_0=\cC$.
\end{nsc}
\begin{nsc}
Let $\cC$ be quasi-pointed. We shall call \emph{normal} any $f\colon X\to Y$ that appears as a pullback of an initial arrow. In other words, $f$ is normal if it fits into a pullback diagram as it is shown below:
\begin{equation}\label{diag:kernel}
\begin{aligned}
\xymatrix{
X\ar[r]^{\omega_X}\ar[d]_{f}
&0\ar[d]^{!_Z}
\\
Y\ar[r]_{g}
&Z
}
\end{aligned}
\end{equation}
In this case, we write $f=\mathsf{ker}(g)$, meaning that $f$ is the \emph{kernel} of $g$. We denote by $\K(\cC)$ the class of normal monomorphisms  of a given category $\cC$. The notation adopted comes from the fact that, in the pointed case, normal monomorphism are just kernels of some morphism.
\end{nsc}
\begin{nsc}
Following \cite{Bo91}, we say that $g$ is the cokernel of $f$, and we write $g=\coker(f)$, when $(\ref{diag:kernel})$ is a pushout. Note that this definition of cokernel is \emph{not} dual to that of kernel, unless the category $\cC$ is pointed. 
We recall from \cite{Bo01} that, if $\cC$ is quasi-pointed and protomodular, every regular epimorphism is the cokernel of its kernel.
\end{nsc}
\begin{nsc}
A reflexive graph $\bA=(A_1,A_0,d,c,e)$ in $\cC$ is a diagram
$\xymatrix{A_1\ar@<1ex>[r]^{d}\ar@<-1ex>[r]_{c}&A_0\ar[l]|{e}}$ with $d\cdot e=1_{A_0} =c\cdot e$.
A morphism of reflexive graphs
$$
\xymatrix{F \colon \bA\ar[r]&\bB=(B_1,B_0,d,c,e)}
$$
is a pair of maps $f_i\colon A_i\to B_i$ ($i=0,1$)
\begin{equation}\label{diag:RGph_morphism}
\begin{aligned}
\xymatrix{
A_1\ar@<-1ex>[d]_{d}\ar@<+1ex>[d]^{c}\ar[r]^{f_1}
&B_1\ar@<-1ex>[d]_{d}\ar@<+1ex>[d]^{c}
\\
A_0\ar[u]|e\ar[r]_{f_0}
&B_0\ar[u]|e
}
\end{aligned}
\end{equation}
such that
$d\cdot f_1 = f_0\cdot d$,
$c\cdot f_1 = f_0\cdot c$ and
$f_1\cdot e = e\cdot f_0$.
Reflexive graphs in $\cC$ and their morphisms form a category we shall denote by $\RGph(\cC)$.
\end{nsc}
\begin{nsc}
Recall that a reflexive relation $R=(R,r,X)$ on a given object $X$ is a subobject $(R,r)$ of
$X\times X$ that contains the diagonal of $X$:
$$
\xymatrix{
&R\ar[d]^{r}
\\
X \ar[r]_-{\langle1,1\rangle}\ar[ur]^{e}
&X\times X
}
$$
If $(S,s,Y)$ is another reflexive relation, on the object $Y$, a morphism of relations $R\to S$ is an
arrow $f\colon X\to Y$ such that $f\times f$ restricts to $R\to S$.

Reflexive relations in $\cC$ and their morphisms form a category we shall denote by $\RRel(\cC)$.
\end{nsc}

\begin{nsc}\label{nsc:embedding_RRel-RGph}
There is an obvious full replete embedding
$$
\xymatrix{
I\colon \RRel(\cC)\ar[r]&\RGph(\cC)
}
$$
defined by $I(R,r,X)=(R,X,p_1\cdot r,p_2\cdot r,e)$ , where $p_1$ and $p_2$ are the product projections.

From now on, we shall freely identify reflexive relations with their associated reflexive graphs.
Consequently, we shall denote the relation $R=(R,r,X)$ by $(R,r_1,r_2,e,X)$,  $(R,r_1,r_2,X)$  or just
$(R,r_1,r_2)$, where $r_i= p_i\cdot r$, for $i=1,2$.
\end{nsc}

\begin{nsc}\label{qui}
As a consequence of \ref{nsc:embedding_RRel-RGph}, a morphism of reflexive relations
$$\xymatrix{f\colon (R,r_1,r_2,X) \ar[r]&(S,s_1,s_2,Y)}$$
will be usually described by a diagram:
$$
\xymatrix{
R\ar@<-1ex>[d]_{r_1}\ar@<+1ex>[d]^{r_2}\ar[r]^{f_|}
&S\ar@<-1ex>[d]_{s_1}\ar@<+1ex>[d]^{s_2}
\\
X\ar[u]\ar[r]_{f}
&Y\ar[u]
}
$$
It is termed \emph{discrete fibration} when the square $s_2\cdot f_|=f\cdot r_2$ is a pullback, \emph{discrete opfibration} when the square $s_1\cdot f_|=f\cdot r_1$ is a pullback. Let us observe that the commutativity of the upward directed diagram is a consequence of the  commutativity of the two downward directed ones.
\end{nsc}

\begin{nsc}\label{nsc:embedding_EqRel-RRel}
Recall that a reflexive relation $(R,r_1,r_2,X)$ is an equivalence relation if it is also symmetric and transitive. Equivalence relations form a full subcategory  $\EqRel(\cC)$ of $\RRel(\cC)$. A morphism of equivalence relations is a discrete fibration if, and only if, it is a discrete opfibration.

A finitely complete category $\cC$ is termed \emph{Mal'tsev} when $\EqRel(\cC)=\RRel(\cC)$. All protomodular categories are Mal'tsev (see \cite{BB}).
\end{nsc}

\begin{nsc}
Recall that an (internal) equivalence relation is called \emph{effective} when it is the kernel pair of a map. A category $\cC$ is \emph{regular}, if it is finitely
complete, it has pullback-stable regular epimorphisms, and all effective
equivalence relations admit coequalizers.
A regular category $\cC$ is \emph{Barr-exact} when all equivalence relations are
effective (see \cite{Ba71}).
\end{nsc}

\begin{nsc}
Quasi-pointed protomodular regular categories are called
\emph{sequentiable}. If they are pointed, they are called \emph{homological}, and they are termed \emph{semi-abelian} when they are also Barr exact and have finite coproducts (see \cite{BB}).
\end{nsc}

\section{Bourn-normal monomorphisms in quasi-pointed categories}
In \cite{Bo00b}, Bourn introduces a notion of normality that identifies those subobjects that should be considered as the classes of a given internal equivalence relation. In other words, he translates the set-theoretical notion of equivalence class in categorical terms.

Then, for a pointed category $\cC$, he presents a procedure to determine a Bourn-normal monomorphism canonically associated with a given equivalence relation $R$.
The same method applies if the base category is just quasi-pointed. We analyze  here some relevant constructions whose proofs are substantially the same as those in [loc.\ cit.]. Bourn's results are recovered when the base category is pointed.

\begin{Definition}[\cite{Bo00b}]
In a category $\cC$ with finite limits, a morphism $n\colon N\to X$ is \emph{Bourn-normal} to an equivalence relation $(R,r_1,r_2)$ on the object $X$ when $n\times n$ factors via $\langle r_1,r_2\rangle$ and the following two commutative diagrams are pullbacks:
\begin{equation}\label{diag:normal}
\begin{aligned}
\xymatrix{\ar@{}[dr]|(.3){\lrcorner}
N\times N\ar[r]^-{\tilde{n}}\ar@{=}[d]
&R\ar[d]^-{\langle r_1,r_2\rangle}
\\
N\times N \ar[r]_(.45){n\times n}
&X \times X
}
\qquad
\xymatrix{\ar@{}[dr]|(.3){\lrcorner}
N\times N\ar[d]_-{p_1}\ar[r]^-{\tilde{n}}
&R\ar[d]^-{r_1}
\\
N\ar[r]_-{n}
&X
}
\end{aligned}
\end{equation}
\end{Definition}
\begin{nsc}
According to what  stated in \ref{qui}, the definition of  Bourn-normal morphism can be rephrased by saying that the morphism of equivalence relations
$$
\xymatrix{
N\times N\ar@<-.6ex>[d]_-{p_1}\ar@<+.6ex>[d]^-{p_2}\ar[r]^-{\tilde{n}}
&R\ar@<-.6ex>[d]_-{r_1}\ar@<+.6ex>[d]^-{r_2}
\\
N\ar[r]_-{n}
&X
}
$$
is a discrete fibration.

\end{nsc}
\begin{nsc}
Bourn has shown that all Bourn-normal maps are monomorphic. Moreover, when the category $\cC$ is protomodular, if $n$ is Bourn-normal to a relation $R$, then $R$ is essentially unique. In other words, normality becomes a property, (see \cite{Bo00b}).
\end{nsc}

Even if the category $\cC$ is not protomodular, still it is possible to define a notion of Bourn-normal monomorphism as a property. However, in this case, we cannot avoid  using the existential quantifier in the definition.
\begin{Definition}\label{def:normal}
We call a morphism $n$ \emph{Bourn-normal} when there exists an equivalence relation $R$ such that $n$ is Bourn-normal to $R$.
\end{Definition}

The class of Bourn-normal monomorphisms  inherits an obvious categorical structure, from the category of arrows, $\mathsf{Arr}(\cC)$. We shall denote the category of Bourn-normal monomorphisms by $\mathsf{N}(\cC)$.

\subsection{A Bourn-normal monomorphism associated with an equivalence relation}

\begin{nsc}\label{nsc:kernels_normal_to_effective}
Let us recall that in a quasi-pointed finitely complete category $\cC$, the kernel of a map $f$ is Bourn-normal to the effective equivalence relation given by the kernel pair of $f$.
A converse of this assertion holds if $\cC$ is pointed, i.e.\ if a map $n$ is Bourn-normal to an effective equivalence relation, then $n$ is normal. Generally, this is not true in the quasi-pointed case, however, a weaker form of it holds true (see Proposition \ref{prop:ker}). Let us begin our analysis from not-necessarily effective equivalence relations.
\end{nsc}

\begin{Proposition}\label{prop:rel_to_norm}
In a quasi-pointed finitely complete category $\cC$, we are given an equivalence relation $(R,r_1,r_2)$ on the object $X$.
Then, if $k=\ker(r_1)$, the map $r_2\cdot k$ is Bourn-normal to $R$.
\end{Proposition}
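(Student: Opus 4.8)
The plan is to write down the comparison map $\tilde n\colon N\times N\to R$ explicitly, using the internal structure of the equivalence relation, and then verify the two pullback conditions in the definition of Bourn-normality. Throughout, write $\langle r_1,r_2\rangle\colon R\to X\times X$ for the monomorphism underlying $R$, $\sigma\colon R\to R$ for its symmetry ($r_1\sigma=r_2$, $r_2\sigma=r_1$), and $\tau\colon R\times_X R\to R$ for its transitivity, where $R\times_X R$ is the pullback of $r_2$ along $r_1$ with projections $\pi_1,\pi_2$ and $\tau$ is characterized by $r_1\tau=r_1\pi_1$, $r_2\tau=r_2\pi_2$. Since $\cC$ is quasi-pointed, $0$ is subterminal, and since $k=\ker(r_1)$ the composite $r_1 k$ factors through $0$; set $n=r_2 k$.

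First I would construct $\tilde n$. Because $0$ is subterminal, the two maps $r_1 k p_1$ and $r_1 k p_2$ from $N\times N$ to $X$ coincide (both factor through $0$), and $r_2(\sigma k p_1)=r_1(k p_1)$; hence $(\sigma k p_1,\,k p_2)$ factors through $R\times_X R$, and I set $\tilde n:=\tau\circ(\sigma k p_1,\,k p_2)$. A one-line computation then gives $r_1\tilde n=r_1\sigma k p_1=r_2 k p_1=n p_1$ and $r_2\tilde n=r_2 k p_2=n p_2$, so $\langle r_1,r_2\rangle\,\tilde n=n\times n$; in particular $n\times n$ factors through $\langle r_1,r_2\rangle$. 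This makes the left-hand square of (\ref{diag:normal}) commute, and that square is then automatically a pullback, since a commuting square with an identity along one edge and a monomorphism along the opposite edge is a pullback.

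The remaining, and only substantial, point is that the right-hand square of (\ref{diag:normal}) — which commutes by $r_1\tilde n=n p_1$ — is a pullback. I would form $P:=N\times_X R$, the pullback of $n$ along $r_1$, with projections $\pi_N\colon P\to N$ and $\pi_R\colon P\to R$, so that the induced comparison morphism is $c:=\langle p_1,\tilde n\rangle\colon N\times N\to P$, and then exhibit an inverse. From $r_2 k\pi_N=n\pi_N=r_1\pi_R$ the pair $(k\pi_N,\pi_R)$ factors through $R\times_X R$; the map $\tau(k\pi_N,\pi_R)$ has $r_1$-component $r_1 k\pi_N$ factoring through $0$, so by the universal property of $k=\ker(r_1)$ it has the form $k\beta$ for a unique $\beta\colon P\to N$. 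Setting $d:=\langle\pi_N,\beta\rangle$, the identities $cd=\id_P$ and $dc=\id_{N\times N}$ are checked by comparing components; since $\langle r_1,r_2\rangle$ and $k$ are monic, this reduces to comparing the $r_1$-, $r_2$- (resp. $r_1k$-, $r_2k$-) components, which follow from the formulas above, the one equality needing a word — $\tau(k p_1,\tilde n)=k p_2$ — again being obtained by post-composing with the monomorphism $\langle r_1,r_2\rangle$. Hence $c$ is invertible, the square is a pullback, and $n=r_2 k$ is Bourn-normal to $R$.

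The main obstacle is purely one of bookkeeping: one must track which pairs factor through the relevant pullbacks of $r_2$ against $r_1$ — for which subterminality of $0$ is used to identify the two ``$r_1 k$'' legs — while avoiding any appeal to associativity of $\tau$, and instead verifying the equations for the inverse $d$ by cancelling the monomorphisms $\langle r_1,r_2\rangle$ and $k$. This is, in substance, Bourn's normalization argument from \cite{Bo00b} transported to the quasi-pointed setting.
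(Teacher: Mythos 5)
Your proof is correct; I checked the construction of $\tilde n=\tau(\sigma k p_1,k p_2)$, the factorization $\langle r_1,r_2\rangle\tilde n=n\times n$, and both inverse-comparison arguments, and they all go through (the appeals to subterminality of $0$ to identify the two composites $r_1kp_1$ and $r_1kp_2$, and to the monomorphisms $\langle r_1,r_2\rangle$ and $k$ to verify $cd=\id$ and $dc=\id$, are exactly what is needed). The route is, however, genuinely different in execution from the paper's. The paper does not build $\tilde n$ by hand: it pastes two families of pullback squares, namely (a) the squares expressing that $k=\ker(r_1)$ is Bourn-normal to the kernel pair $R\times_X R$ of $r_1$ (a fact recalled, not proved, just before the proposition), and (b) the ``well known'' pullbacks formed by the twisted-transitivity map $m\colon R\times_X R\to R$ over $r_2\colon R\to X$. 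The composite $m\cdot\bar k$ of the paper is, after unwinding, the same morphism as your $\tau(\sigma kp_1,kp_2)$, so the underlying idea coincides; what differs is that you verify the pullback conditions directly by exhibiting inverses to the comparison maps, whereas the paper delegates both verifications to cited results and reduces the proof to a two-line pasting. Your version is longer but self-contained and makes explicit where symmetry, transitivity and quasi-pointedness each enter; the paper's is shorter but leans on the two auxiliary facts. Either is acceptable; if you keep yours, it would be worth remarking that your $\tilde n$ is precisely the restriction of the twisted-transitivity map, which connects your computation to the standard normalization argument of Bourn.
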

The composition  $r_2\cdot k$ is known as the \emph{normalization} of the (equivalence) relation $(R,r_1,r_2)$.
\begin{proof}
Let us consider the following diagram:
\begin{equation}\label{diag:rel_to_normal}
\begin{aligned}
\xymatrix{
K\times K\ar[r]^-{\bar{k}}\ar@<-.5ex>[d]_{p_1}\ar@<+.5ex>[d]^{p_2}
&R\times_X R\ar[r]^-{m}\ar@<-.5ex>[d]_{p_1}\ar@<+.5ex>[d]^{p_2}
&R\ar@<-.5ex>[d]_{r_1}\ar@<+.5ex>[d]^{r_2}
\\
K\ar[r]_-{k}\ar[d]
&R\ar[r]_-{r_2}\ar[d]^{r_1}
&X
\\
0\ar[r]
&X
}
\end{aligned}
\end{equation}
where $(K,k)$ is the kernel of $r_1$, $(R\times_X R,p_1,p_2)$ is the kernel pair of $r_1$ and $m$ is the \emph{twisted-transitivity}  map that (set theoretically) associates the element $yRz$ with the pair $(xRy, xRz)$. It is well known that in such a situation, the two squares on the right (whose vertical arrows have the same index) are pullbacks. Moreover, as recalled in \ref{nsc:kernels_normal_to_effective}, $k$ is Bourn-normal to $(R\times_X R,p_1,p_2)$, so that the upper left squares  (whose vertical arrows have the same index) are pullbacks too. By pasting the upper squares, one easily deduces that the map $r_2\cdot k$ is Bourn-normal to $R$.
\end{proof}

The process described above gives rise to a functor.
\begin{Proposition}\label{prop:Nor(R)}
Let $\cC$ be a finitely complete quasi-pointed category. The construction described in Proposition \ref{prop:rel_to_norm}  extends to a faithful functor
$$
\xymatrix{
\Nor\colon \EqRel(\cC)\ar[r]&\mathsf{N}(\cC)
}
$$
that associates with each equivalence relation $(R,r_1,r_2)$, the monomorphism
$r_2\cdot k$, which is Bourn-normal to $R$.
\end{Proposition}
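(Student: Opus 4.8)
The plan is to upgrade the object-level construction of Proposition \ref{prop:rel_to_norm} to a functor by specifying its action on morphisms and then checking functoriality and faithfulness. First I would take a morphism of equivalence relations $f\colon (R,r_1,r_2,X)\to(S,s_1,s_2,Y)$, presented as a pair $(f_|\colon R\to S,\ f\colon X\to Y)$ compatible with the structural maps. Since $f_|$ commutes with the first projections, $s_1\cdot f_| = f\cdot r_1$, and the kernel functor $\cK$ is defined by change of base along initial arrows, $f_|$ restricts to a map $\bar{f}\colon K\to L$ between the kernels $k=\ker(r_1)$ and $\ell=\ker(s_1)$; concretely $\bar f$ is the unique arrow with $\ell\cdot\bar f = f_|\cdot k$ and $\omega_L\cdot\bar f=\omega_K$, obtained from the universal property of the pullback defining $L$. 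I would then set $\Nor(f)\colon (r_2\cdot k)\to(s_2\cdot\ell)$ to be the pair $(\bar f, f)$; the required square $\ell\cdot\bar f = f_|\cdot k$ composed with $s_2\cdot f_| = f\cdot r_2$ gives $(s_2\cdot\ell)\cdot\bar f = f\cdot(r_2\cdot k)$, i.e. $(\bar f,f)$ is a genuine morphism in $\mathsf{Arr}(\cC)$, hence in $\mathsf{N}(\cC)$ once we know the codomain is Bourn-normal, which is Proposition \ref{prop:rel_to_norm}.

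Next I would verify the functor axioms. Identities: $\Nor(1_{\bR})=(\overline{1_R},1_X)$, and $\overline{1_R}$ is forced by uniqueness in the pullback to be $1_K$, so $\Nor$ preserves identities. Composition: given $g\colon\bS\to\bT$ with kernel $\ell'=\ker(t_1)$ on the object $W$, both $\overline{g\cdot f}$ and $\bar g\cdot\bar f$ satisfy $\ell'\cdot(-) = (g_|\cdot f_|)\cdot k$ and the relevant compatibility with $\omega$, so uniqueness of the factorization through the pullback defining the kernel of $t_1$ forces $\overline{g\cdot f}=\bar g\cdot\bar f$, giving $\Nor(g\cdot f)=\Nor(g)\cdot\Nor(f)$. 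This is the only place any real care is needed, and it is the step I expect to be the main (though mild) obstacle: one must be disciplined about which universal property pins down each induced map on kernels, since the construction in \ref{diag:rel_to_normal} factors through the intermediate kernel pair $R\times_X R$ and the twisted-transitivity map $m$, and one should check the naturality squares for $\bar k$ and $m$ are compatible with $f$ before concluding. Once the uniqueness arguments are set up, everything is a diagram chase.

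Finally, faithfulness. Suppose $\Nor(f)=\Nor(f')$ for two parallel morphisms $f,f'\colon\bR\to\bS$; then in particular their base components agree, $f=f'$. But a morphism of reflexive relations is entirely determined by its component on the base object: since $R\hookrightarrow X\times X$ and $S\hookrightarrow Y\times Y$ are monomorphisms and $f_|$ is the restriction of $f\times f$ (as recalled in \ref{nsc:embedding_RRel-RGph} and \ref{qui}), $f=f'$ forces $f_|=f'_|$, hence $f=f'$ as morphisms of equivalence relations. Thus $\Nor$ is faithful. I would remark that $\Nor$ is not full in general — distinct relations can share a Bourn-normal class — which is exactly the phenomenon the rest of the paper addresses.

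\begin{proof}
Let $f\colon (R,r_1,r_2,X)\to(S,s_1,s_2,Y)$ be a morphism of equivalence relations, given by a pair $(f_|\colon R\to S,\,f\colon X\to Y)$ with $s_i\cdot f_| = f\cdot r_i$ for $i=1,2$ and $f_|\cdot e = e\cdot f$. Write $k=\ker(r_1)$ with domain $K$ and $\ell=\ker(s_1)$ with domain $L$. Since $s_1\cdot f_| = f\cdot r_1$ and $f$ maps the null support condition on the kernel accordingly, the pair $(f_|\cdot k,\omega_K)$ factors uniquely through the pullback defining $\ell$, yielding a unique $\bar f\colon K\to L$ with
$$
\ell\cdot\bar f = f_|\cdot k,\qquad \omega_L\cdot\bar f = \omega_K .
$$
Composing the first identity with $s_2\cdot f_| = f\cdot r_2$ gives $(s_2\cdot\ell)\cdot\bar f = f\cdot(r_2\cdot k)$, so the pair $(\bar f,f)$ is a morphism $(r_2\cdot k)\to(s_2\cdot\ell)$ in $\mathsf{Arr}(\cC)$; by Proposition \ref{prop:rel_to_norm} both objects lie in $\mathsf{N}(\cC)$, so we may set $\Nor(f)=(\bar f,f)$.

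For the identity $1_{\bR}$, the arrow $1_K$ satisfies $\ell\cdot 1_K = k$ and $\omega_K\cdot 1_K=\omega_K$ with $\ell=k$, $L=K$, so by uniqueness $\overline{1_R}=1_K$ and $\Nor(1_{\bR})=1_{(r_2\cdot k)}$. For composition, let $g\colon\bS\to\bT$ be a further morphism of equivalence relations, with $\ell'=\ker(t_1)$ of domain $L'$. Both $\overline{g\cdot f}$ and $\bar g\cdot\bar f$ are arrows $K\to L'$ satisfying $\ell'\cdot(-) = (g_|\cdot f_|)\cdot k$ and $\omega_{L'}\cdot(-)=\omega_K$; by the uniqueness in the universal property of the pullback defining $\ell'$ they coincide. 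Hence $\Nor(g\cdot f)=\Nor(g)\cdot\Nor(f)$, and $\Nor$ is a functor.

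Finally, $\Nor$ is faithful. If $\Nor(f)=\Nor(f')$ for parallel $f,f'\colon\bR\to\bS$, then the base components agree, $f=f'$. Under the identification of \ref{nsc:embedding_RRel-RGph} the relation $R$ is a subobject of $X\times X$ and $f_|$ is the restriction of $f\times f$ along the monomorphisms $R\hookrightarrow X\times X$ and $S\hookrightarrow Y\times Y$; thus $f=f'$ forces $f_|=f'_|$, so $f=f'$ as morphisms of $\EqRel(\cC)$. This proves the claim.
\end{proof}
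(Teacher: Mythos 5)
Your proof is correct, and since the paper states Proposition \ref{prop:Nor(R)} without any proof, your argument simply supplies the routine verification the author leaves implicit: the action on morphisms via the induced map on kernels of $r_1$, functoriality by uniqueness of factorizations through the defining pullbacks, and faithfulness from the fact that a morphism of relations is determined by its base arrow (it literally \emph{is} the base arrow, by the definition in Section \ref{sec:preliminaries}). No gaps; this matches the evidently intended construction.
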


\begin{nsc}
The Bourn-normal subobjects with which $\Nor$ associates a relation are indeed rather special ones, since their domains
have null support. We shall denote by $\mathsf{N}_0(\cC)$ the full subcategory of Bourn-normal monomorphisms whose domains have null support.
It will be clearer later (as a consequence of Corollary \ref{cor:norm_0-norm}) that $\mathsf{N}_0(\cC)$ is precisely the essential image of the functor $\Nor$.
\end{nsc}

Finally, we can prove a sort of converse of what is stated in \ref{nsc:kernels_normal_to_effective}.

\begin{Proposition}\label{prop:ker}
Let $\cC$ be a quasi-pointed finitely complete category.
If the morphism $\xymatrix{n\colon N\ar[r]&X}$ is Bourn-normal to the \emph{effective} equivalence relation
$$(X\times_Y X,p_1,p_2),$$  kernel pair of $\xymatrix{f\colon X\ar[r]&Y}$, then
$\ker(f)\leq n$ (as subobjects of $X$). The equality holds if, and only if,
$n$ is in $\mathsf{N}_0(\cC)$.
\end{Proposition}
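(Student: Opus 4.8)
The plan is to exploit the fact that $n$ being Bourn-normal to the kernel pair $(X\times_Y X, p_1, p_2)$ of $f$ means precisely that the square
$$
\xymatrix{
N\times N\ar[d]_{p_1}\ar[r]^-{\tilde n}
&X\times_Y X\ar[d]^{p_1}
\\
N\ar[r]_-{n}
&X
}
$$
is a pullback (together with the condition that $n\times n$ factors through $\langle p_1,p_2\rangle$). First I would restrict this pullback along the point $0\to N$ of the reflexive graph $N\times N$, i.e.\ compose the left-hand square of (\ref{diag:normal}) with the kernel data; since $p_1\cdot\langle 1,1\rangle = 1_N$, the fibre of $p_1\colon N\times N\to N$ over $0\to N$ is the kernel of $p_2\colon N\times N\to N$, which, using the projection description, is $0\times N\cong N$ via the second coordinate. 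Transporting this through the pullback $\tilde n$, I get that the fibre of $p_1\colon X\times_Y X\to X$ over the composite $0\to N\xrightarrow{\;n\;} X$ is $N$, with the monomorphism into $X$ being exactly $p_2\cdot k' $ where $k'$ is the relevant kernel — in other words, $n$ (restricted to null support) agrees with the normalization of $X\times_Y X$ pulled back along $n$. But the normalization of $X\times_Y X$ along $p_1$ is, essentially by \ref{nsc:kernels_normal_to_effective} and the concrete description of the kernel pair, nothing but $\ker(f)\colon \ker(f)\to X$. So the comparison produces a factorization exhibiting $\ker(f)$ as a subobject of $n$.

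More carefully, the key diagram is the pasting
$$
\xymatrix{
\ker(f)\ar[r]\ar[d]
&N\times N\ar[d]^{p_1}\ar[r]^-{\tilde n}
&X\times_Y X\ar[d]^{p_1}
\\
0\ar[r]
&N\ar[r]_-{n}
&X
}
$$
where the outer rectangle computes, by the pullback property of the right square and the definition of kernel, the kernel of $f$ (because $p_1\colon X\times_Y X\to X$ pulled back along $0\to X$ is $\ker f\hookrightarrow X$... more precisely along $X\xrightarrow{f}Y$, and the fibre over the zero support object gives $\ker f$). The top composite $\ker(f)\to N\times N\xrightarrow{p_2} N$ followed by $n$ equals $\ker(f)\hookrightarrow X$ by the second-coordinate pullback square, which gives the desired factorization $\ker(f)\to N$, monic because $\ker(f)\to X$ and $n$ are monic. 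Hence $\ker(f)\le n$.

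For the equality clause: if $n\in\mathsf{N}_0(\cC)$ then $N$ has null support, so the factorization $\ker(f)\to N$ is a monomorphism between objects of $\cC_0$ over $X$; I would argue it is an isomorphism by showing it is also a regular epi, or more directly by reversing the construction — since $N\to X$ has null support and is Bourn-normal to an effective relation, the restriction of $p_1\colon N\times N\to N$ over $0$ hits all of $N$ (as $N\cong 0\times N$), and chasing back through the pullbacks shows $N$ exhausts the fibre computing $\ker f$. Conversely, if $\ker(f)\to N$ is an isomorphism then $N\cong\ker(f)$, which has null support (being a pullback of $0\to Y$), so $n\in\mathsf{N}_0(\cC)$.

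The main obstacle I expect is bookkeeping the quasi-pointed subtleties: in the quasi-pointed (non-pointed) case $0\to 1$ is merely monic, so "fibre over $0$" and "kernel" must be handled through the factorization $\cK_A\colon\Pt_A(\cC)\to\cC_0\hookrightarrow\cC$ rather than naively, and one must be careful that $p_1\colon X\times_Y X\to X$ restricted appropriately really yields $\ker(f)$ and not something with a larger support. Making the identification of the null-support fibre of $N\times N$ along $p_1$ with $N$ precise (rather than with some quotient or sub of $N$) is the delicate point, and it is exactly here that the hypothesis $n\in\mathsf{N}_0(\cC)$ enters to upgrade the inequality to an equality.
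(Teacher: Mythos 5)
Your ``more careful'' second paragraph is essentially the paper's own proof: the paper pastes the pullback squares $K\times K\to N\times N\to X\times_Y X$ over $K\to N\to X$, with $K=\ker(N\to 1)$, and concludes $n\cdot k=\ker(f)$, which is exactly your outer-rectangle computation of the fibre of $p_1\colon X\times_Y X\to X$ over the initial arrow of $X$; the equality clause is handled the same way in both. However, your first paragraph contains a genuine error that you only half-retract at the end: the fibre of $p_1\colon N\times N\to N$ over $0\to N$ is $0\times N$, and in a quasi-pointed (non-pointed) category $0\times N$ is \emph{not} isomorphic to $N$. Since $0\to 1$ is merely a monomorphism, $0\times N=0\times_1 N$ is precisely $\ker(N\to 1)=K$, a subobject of $N$ that coincides with $N$ only when $N$ has null support; the paper makes exactly this remark after Proposition \ref{prop:norm_0-norm}. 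If the identification ``$0\times N\cong N$ via the second coordinate'' were correct, transporting it through the pullback $\tilde n$ would give $\ker(f)\cong N$ unconditionally, i.e.\ equality always, contradicting the statement being proved. The entire content of the inequality $\ker(f)\leq n$ is that the comparison monomorphism $\ker(f)\to N$ is $k\colon K\to N$, which is invertible precisely when $n\in\mathsf{N}_0(\cC)$. So you should replace that identification by $0\times N\cong K$ and name the comparison map as $k$; with that correction the rest of your argument, including both directions of the equality clause, goes through and coincides with the paper's.
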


\begin{proof}
Let us consider the kernel $(K,k)$ of $\xymatrix{N\ar[r]&1}$ with its associated kernel pair relation (see diagram (\ref{diag:norm_to_0-norm})) together  with $n$ and its associated kernel pair relation:
\begin{equation}
\begin{aligned}
\xymatrix{
K\times K\ar[r]^-{k\times k}\ar@<-.5ex>[d]_{p_1}\ar@<+.5ex>[d]^{p_2}
&N\times N\ar@<-.5ex>[d]_{p_1}\ar@<+.5ex>[d]^{p_2}\ar[r]^{n_|}
&X\times_Y X\ar@<-.5ex>[d]_{p_1}\ar@<+.5ex>[d]^{p_2}
\\
K\ar[r]_-{k}\ar[d]
&N\ar[r]_{n}
&X\ar[d]^{f}
\\
0\ar[rr]
&&Y
}
\end{aligned}
\end{equation}
As the left vertical fork is not only left but also right exact, the initial arrow produces the lower commutative rectangle. Moreover, since (upper) pullbacks compose by Corollary 2 in \cite{Bo00b}, the lower rectangle is a pullback, i.e.\ $n\cdot k=\ker(f)$.
Finally, the comparison between $\ker(f)$ and $n$ is the monomorphism $k$, so that
$\ker(f)\leq n$ as desired. The last statement is obvious.
\end{proof}

The next proposition, and the following corollary, will be useful later. They provide an alternative way to compute the  normalization of an equivalence relation $R$ when a Bourn-normal subobject of $R$ is already known.

\begin{Proposition}\label{prop:norm_0-norm}
Let $\cC$ be a quasi-pointed finitely complete category.
If the morphism $\xymatrix{n\colon N\ar[r]&X}$ is Bourn-normal to an  equivalence relation
$(R,r_1,r_2)$, then
$$
\Nor(R)=n\cdot k,
$$
where $k=\ker(\xymatrix{N\ar[r]&1})$.
\end{Proposition}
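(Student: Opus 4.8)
The plan is to reduce the statement to Proposition~\ref{prop:rel_to_norm} by identifying the normalization $\Nor(R) = r_2 \cdot \ker(r_1)$ with the composite $n \cdot k$. Concretely, writing $(K,k)$ for the kernel of $N \to 1$ and $(L,\ell)$ for the kernel of $r_1 \colon R \to X$, I would first exhibit an isomorphism $K \cong L$ compatible with the two monomorphisms into $X$ (namely $n \cdot k$ on one side and $r_2 \cdot \ell$ on the other). Once that identification is in place, the equation $\Nor(R) = n \cdot k$ is immediate from the definition of $\Nor$ in Proposition~\ref{prop:Nor(R)}.

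The key step is the construction of the comparison. Since $n$ is Bourn-normal to $(R,r_1,r_2)$, the right-hand square of diagram~(\ref{diag:normal}),
$$
\xymatrix{
N\times N\ar[d]_-{p_1}\ar[r]^-{\tilde{n}}
&R\ar[d]^-{r_1}
\\
N\ar[r]_-{n}
&X,
}
$$
is a pullback. Now I would pull back the left vertical fork $K \to N \to 0$ (which is left exact, being a kernel) along this square: restricting $r_1$ along $n$ gives $p_1 \colon N \times N \to N$, and the kernel of $p_1 \colon N \times N \to N$ is precisely $\langle n, e\cdot\omega_N\rangle$ — more to the point, the kernel of $p_1$ computed after the further pullback along $0 \hookrightarrow N$ identifies the fibre with $K$. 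Tracking the outer rectangle, the kernel of $r_1$ is obtained as a pullback of the kernel of $p_1$ along $\tilde n$; since kernels of split epimorphisms in this configuration behave well (and one may invoke the composition of pullbacks, as in Corollary~2 of \cite{Bo00b}, exactly as in the proof of Proposition~\ref{prop:ker}), one gets a pullback square exhibiting $L = \ker(r_1)$ as isomorphic to $K = \ker(\omega_N)$, with the isomorphism $\kappa \colon K \to L$ satisfying $\ell \cdot \kappa = \tilde n \cdot (k \times (e\cdot\omega_K))$ up to the relevant identifications. Composing with $r_2$ and using $r_2 \cdot \tilde n = n \cdot p_2$ (part of $n$ being a morphism of relations) then yields $r_2 \cdot \ell \cdot \kappa = n \cdot k$, which is what we want.

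I expect the main obstacle to be bookkeeping in the quasi-pointed setting: one must be careful that "kernel" here means the Bourn-normal sense of a pullback of an initial arrow (as in~(\ref{diag:kernel})), that the relevant forks are right exact as well as left exact (so that the initial object $0$ "descends" correctly, exactly the point used in Proposition~\ref{prop:ker}), and that the diagonal $e \colon N \to N\times N$ and its interaction with the kernels is handled without tacitly assuming the category is pointed. A cleaner route, which I would pursue if the direct diagram chase gets unwieldy, is to observe that $n \cdot k$ is itself in $\mathsf{N}_0(\cC)$: it is Bourn-normal to $R$ (by composing the pullbacks expressing normality of $n$ with those expressing $k = \ker(\omega_N)$, again via Corollary~2 of \cite{Bo00b}), and its domain $K$ has null support; then since $\Nor(R)$ is by construction the unique-up-to-iso Bourn-normal-to-$R$ monomorphism with null-support domain arising from $r_1$, and both $n\cdot k$ and $\Nor(R)$ satisfy this, the equality follows — this shifts the burden onto the (already essentially established) uniqueness of the null-support normalization rather than onto an explicit isomorphism.
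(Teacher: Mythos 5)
Your main argument is essentially the paper's proof: both identify $\ker(r_1)$ with $K=\ker(N\to 1)$ by pasting the pullback square $r_1\cdot\tilde n=n\cdot p_1$ from the definition of Bourn-normality with the pullback computing $\ker(p_1)$, and then conclude by composing with $r_2$ and using $r_2\cdot\tilde n=n\cdot p_2$ (the paper routes the identification through $K\times K$ and $\langle 0,1\rangle$ whereas you go directly via $0\times N\cong K$; your formula $\langle n,e\cdot\omega_N\rangle$ for $\ker(p_1)$ is garbled, since $N$ need not have null support, but your following sentence recovers the correct identification). One caution about your proposed ``cleaner route'': the essential uniqueness of the null-support Bourn-normal monomorphism to $R$ is, in the paper, a consequence of Corollary \ref{cor:norm_0-norm}, which itself rests on this very proposition, so invoking it here is circular unless you prove it independently --- which would again reduce to the pasting argument above.
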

\begin{proof}
Let $k'=\ker(r_1)$. We know from the definition that $\Nor(R)=r_2\cdot k'$.
Let us consider the following pasting of pullback diagrams (of solid arrows):
$$
\xymatrix{
K\ar[d]\ar[r]^-{\langle0,1\rangle}
&K\times K\ar@<-.5ex>[d]_{p_1}\ar@<+.5ex>@{-->}[d]^{p_2}\ar[r]^-{k_|}
&N\times N\ar@<-.5ex>[d]_{p_1}\ar@<+.5ex>@{-->}[d]^{p_2}\ar[r]^-{n_|}
&R\ar@<-.5ex>[d]_{r_1}\ar@<+.5ex>@{-->}[d]^{r_2}
\\
0\ar[r]
&K\ar[r]_-{k}
&N\ar[r]_-{n}
&X
}
$$
It shows that $k'=n_|\cdot k_| \cdot\langle0,1\rangle$.
Then, using the dashed arrows, one computes
$r_2\cdot k'=r_2\cdot n_|\cdot k_| \cdot\langle0,1\rangle=n\cdot k\cdot p_2 \cdot\langle0,1\rangle=n\cdot k$.
\end{proof}

Let us observe that, in the proof, we were allowed to use the arrow $\langle0,1\rangle$ in the first pullback on the left, even for $\cC$ only quasi-pointed, for the reason that $K$ has null support.
The pasting of the two pullbacks on the left shows that, since the object $N$ does not have null support, the kernel of the product projections from $N\times N$ is not $N$, as in the pointed cases, but $K$, i.e.\ the kernel of the final map $N\to 1$.

The following statement follows immediately from Proposition \ref{prop:norm_0-norm}, and it extends Proposition \ref{prop:ker} to not necessarily effective equivalence relations, so that where equivalence relations determine monomorphisms with null support, effective relations determine normal monomorphisms.

\begin{Corollary}\label{cor:norm_0-norm}
Let $\cC$ be a quasi-pointed finitely complete category.
If $n$ is Bourn-normal to an equivalence relation $R$, then $\Nor(R)\leq n$. The equality holds if and only if
$n$ is in $\mathsf{N}_0(\cC)$.
\end{Corollary}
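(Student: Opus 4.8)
The plan is to deduce the corollary directly from Proposition \ref{prop:norm_0-norm}, treating it as essentially a restatement that forgets the specific factorization. First I would recall the hypothesis: $n\colon N\to X$ is Bourn-normal to the equivalence relation $R$ on $X$. By Proposition \ref{prop:norm_0-norm}, this gives a concrete equality $\Nor(R)=n\cdot k$, where $k=\ker(N\to 1)$ is the kernel of the terminal map. Since $k$ is a monomorphism (being a kernel, hence a regular mono), the composite $n\cdot k$ factors through $n$ via the mono $k$; this is precisely the assertion $\Nor(R)\leq n$ as subobjects of $X$. So the inequality is immediate and requires essentially no work beyond citing the previous proposition and observing that $k$ is monic.

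Next I would address the equality case. The claim is that $\Nor(R)=n$ (as subobjects) if and only if $n\in\mathsf{N}_0(\cC)$, i.e.\ the domain $N$ of $n$ has null support. For the ``if'' direction: if $N$ has null support, then the terminal map $N\to 1$ factors through $0$, so its kernel $k=\ker(N\to 1)$ is (the pullback of) the identity along an iso, hence $k$ is an isomorphism; therefore $n\cdot k$ and $n$ represent the same subobject, and $\Nor(R)=n$. For the ``only if'' direction: suppose $\Nor(R)=n$ as subobjects of $X$. Then the comparison mono $k\colon N\to N$ induced by $n\cdot k = n$ must be an isomorphism (two representatives of the same subobject are related by an iso, and since $n$ is monic this iso is forced to be $k$ itself). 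But $k=\ker(N\to 1)$ is the kernel of $\omega\colon N\to 1$, and a kernel is an isomorphism precisely when the map it is the kernel of factors through $0$ — equivalently $N$ lies in $\cC_0$. Hence $n\in\mathsf{N}_0(\cC)$. Alternatively, and perhaps more cleanly, one simply invokes Proposition \ref{prop:ker} together with the observation (already made in the text preceding the corollary) that Proposition \ref{prop:norm_0-norm} extends Proposition \ref{prop:ker} to the non-effective case, so the equality criterion ``$n\in\mathsf{N}_0(\cC)$'' transfers verbatim.

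I expect the only mild subtlety — hardly an obstacle — is making precise the passage from the equality of \emph{morphisms} $\Nor(R)=n\cdot k$ to the statement about \emph{subobjects}, and correctly identifying the comparison morphism as $k$ itself in the equality case. This is a routine diagram-chase using that $n$ is a monomorphism: the factorization of $\Nor(R)$ through $n$ is unique, so it must coincide with $k$, and then $\Nor(R)\simeq n$ as subobjects iff this unique factorization $k$ is invertible iff $k=\ker(\omega_N)$ is invertible iff $N$ has null support. Given all the machinery already assembled, the proof is a two-line argument, which is why the statement can legitimately be offered as a corollary.
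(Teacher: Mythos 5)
Your argument is correct and is exactly the route the paper intends: the corollary is stated as an immediate consequence of Proposition \ref{prop:norm_0-norm}, and your spelling-out (the inequality via the mono $k=\ker(N\to 1)$, and the equality case reduced to $k$ being invertible, which happens precisely when $N$ has null support) is the expected one-line deduction. The only cosmetic slip is writing the comparison as $k\colon N\to N$ where it should be $k\colon K\to N$; the substance is fine.
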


Note that we get a factorization $\Nor=L\cdot \Nor_0$ through the embedding $$\xymatrix{L\colon\mathsf{N}_0(\cC)\ \ar@{>->}[r]&\mathsf{N}(\cC)}.$$

A relevant consequence of Corollary \ref{cor:norm_0-norm} is that there is an essentially unique Bourn-normal monomorphism in $\mathsf{N}_0(\cC)$, which is Bourn-normal to a given equivalence relation.

\section{The induced bifibration on an epi-reflective subcategory}
Let $V$ be the functor
$$
\xymatrix{\EqRel(\cC)\ar[r]&\cC}
$$
that maps an equivalence relation on an object $X$, to the object $X$ itself. It is well known that, if $\cC$ has pullbacks, then $V$ is a fibration, with cartesian maps given by fully faithful morphisms of equivalence relations.

Then, the pullback on the left of diagram (\ref{diag:normal}) in the definition of Bourn's normality, can be interpreted as a cartesian lift of $n$ with respect to the fibration $V$.

The definition of Bourn-normal monomorphism can be restated as follows: a morphism $\xymatrix{N\ar[r]^n&X}$ is Bourn-normal to the equivalence relation $(R,r_1,r_2)$ on $X$ if the following two conditions are fulfilled:
\begin{itemize}
\item $n^*(R)=\nabla N$
\item the cartesian lift of $n$, $\xymatrix{\nabla N \ar[r]^-{\tilde{n}}& R}$, is a discrete opfibration.
\end{itemize}
This formulation, suggests to investigate the possible (op)fibrational aspects involved in the definition of Bourn-normal monomorphism. In the present section, we take a more formal approach by studying when a (op)fibration restricts to a reflective subcategory. We will then apply our results in order to show that the functor $V$ is a bifibration.

\begin{nsc} Notation.
Only for the rest of this section, we shall adopt a slightly different notation  for categories, functors, objects of a category etc. In this way, we mean to stress the formal approach undertaken.
\end{nsc}

\begin{nsc}
A \emph{bifibration} is a functor $F\colon \cA \to \cC$ that is at the same time a fibration and an opfibration \cite{Gr59}. In this section we discuss when a bifibration $F$ as above, induces  by restriction a bifibration $FI$ on an epi-reflective subcategory $I\colon\cB\hookrightarrow\cA$. We start with a basic lemma.
 \end{nsc}
\begin{Lemma}\label{lemma:e_vertical}
Let us consider the following diagram of categories and functors:
\begin{equation}\label{diag:basic_sit}
\begin{aligned}
\xymatrix{
\cA \ar@/^2ex/[rr]^{R}  \ar@<+1ex>@{}[rr]|{\bot}\ar[dr]_{F}
&&
\cB \ar[ll]^{I}\ar[dl]^{FI}
\\
&\cC}\end{aligned}
\end{equation}
where $(R\dashv I, \eta,\epsilon)$ is an adjunction. Then, if $\eta$ has $F$-vertical components (i.e.\ $F(\eta_a)=id, \forall a\in\cA$),  also $\epsilon$ has $FI$-vertical components.
\end{Lemma}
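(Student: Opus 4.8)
The plan is to exploit the standard triangle identities of the adjunction $(R\dashv I,\eta,\epsilon)$ together with the hypothesis that $\eta$ is $F$-vertical, and to remember that an epi-reflective subcategory has the special feature that the components $\eta_{Ib}$ are isomorphisms (equivalently $\epsilon$ is an isomorphism; more precisely, for a reflective subcategory one always has $\epsilon$ invertible when the inclusion is full and faithful, and ``epi-reflective'' guarantees each $\eta_a$ is an epimorphism). What I actually need is only that $\eta_{Ib}$ is an isomorphism for every object $b\in\cB$, which holds for any full reflective subcategory. Write $a=Ib$.

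First I would recall the triangle identity $R\epsilon_b \cdot \eta_{RIb}=\id$ — no, the one I want is the other triangle: $\epsilon_b \cdot R(\eta_{Ib})$... let me instead use $\id_{Ib} = I(\epsilon_b)\cdot \eta_{Ib}$, which is the triangle identity $(\epsilon I)\cdot(I\eta)=\id_I$ evaluated at $b$. Applying the functor $FI$ (equivalently $F$, since $FI(\epsilon_b)=F(I\epsilon_b)$), and using that $F$ is a functor, I get $\id = F(I\epsilon_b)\cdot F(\eta_{Ib})$. By hypothesis $F(\eta_{Ib})=\id$, so $F(I\epsilon_b)=\id$; that is, $I(\epsilon_b)$ is $F$-vertical, which by definition of the composite fibration $FI$ means $\epsilon_b$ is $FI$-vertical. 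Since $b$ was arbitrary, $\epsilon$ has $FI$-vertical components, as claimed.

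The only subtlety worth flagging is the bookkeeping between $F$ and $FI$: a morphism $\phi$ in $\cB$ is $FI$-vertical precisely when $FI(\phi)=F(I\phi)$ is an identity, so ``$I\epsilon_b$ is $F$-vertical'' and ``$\epsilon_b$ is $FI$-vertical'' are literally the same statement. I would spell this out in one line to keep the argument self-contained. I do not expect any real obstacle here — the proof is essentially a one-line consequence of a triangle identity and functoriality of $F$; the hypotheses of the lemma have been arranged exactly so that this goes through, and the ``epi-reflective'' hypothesis is not even needed for this particular statement (it will presumably be used later when upgrading verticality of $\eta$, $\epsilon$ to the bifibration conclusion).
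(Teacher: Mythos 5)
Your proof is correct and is essentially identical to the paper's: both apply $F$ to the triangle identity $I\epsilon_b\cdot\eta_{Ib}=\id_{Ib}$ and use $F(\eta_{Ib})=\id$ to conclude $FI(\epsilon_b)=\id$. The preliminary remarks about $\eta_{Ib}$ being an isomorphism in the (epi-)reflective case are, as you yourself note, not needed, and the lemma indeed holds for an arbitrary adjunction.
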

\begin{proof}
Apply $F$ to the triangle identity
$$
\xymatrix{
Ib\ar[r]^-{\eta_{Ib}}\ar[dr]_{id}
&IRIb\ar[d]^{I\epsilon_b}
\\&Ib
}
$$
and use $F(\eta_{Ib})=id$.
\end{proof}
In the case of the lemma, the adjunction restricts to fibres.

\begin{Proposition}\label{prop:fib_reflection}
Let us consider diagram (\ref{diag:basic_sit}), where $(R\dashv I, \eta,\epsilon)$ is a full replete  epi-reflection,  $F$ is a fibration and $\eta$ has $F$-vertical components. Then the following hold:
\begin{itemize}
\item[(i)] every $F$-cartesian map with its codomain in $\cB$ is itself in $\cB$;
\item[(ii)] $FI$ is a fibration with the same cartesian lift as $F$;
\item[(iii)] $I$ is a cartesian functor over $\cC$.
\end{itemize}
\end{Proposition}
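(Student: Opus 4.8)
The plan is to exploit the vertical-unit hypothesis as much as possible, handling the three claims in a logically convenient order: (i), then (iii), then (ii).

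First I would prove (i). Suppose $\varphi\colon a\to Ib$ is an $F$-cartesian map with codomain in (the replete image of) $\cB$. Consider the unit $\eta_a\colon a\to IRa$. Since $\eta$ has $F$-vertical components, $F(\eta_a)=\id$, and $\varphi$ is $F$-cartesian over $F(\varphi)$, so there is a unique map $\psi\colon IRa\to Ib$ with $\psi\cdot\eta_a=\varphi$ and $F(\psi)=F(\varphi)$. On the other hand, $\varphi\colon a\to Ib$ factors through the unit by the universal property of the reflection (naturality of $\eta$ applied to $\varphi$, together with $\eta_{Ib}$ being an isomorphism since the reflection is full and replete): there is $\bar\varphi\colon Ra\to b$ with $I\bar\varphi\cdot\eta_a=\varphi$. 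Then $I\bar\varphi$ and $\psi$ are two maps $IRa\to Ib$ with the same composite with $\eta_a$; I would check they agree — both have the same image under $F$ is not quite immediate for $I\bar\varphi$, so the clean route is to use that $\eta_a$ is an epimorphism (epi-reflection), giving $I\bar\varphi=\psi$ directly from $I\bar\varphi\cdot\eta_a=\varphi=\psi\cdot\eta_a$. Hence $\varphi=\psi\cdot\eta_a$ with $\eta_a$ an iso (it is split by $I\bar\varphi$ composed suitably, or simply: $\eta_a$ is epi and split mono as a retract via the triangle identity once we know $IR\eta_a$ is iso) — more carefully, $\eta_{IRa}\cdot\eta_a = IR\eta_a\cdot\eta_a$ and full replete reflectivity makes $\eta_{IRa}$ iso, so $\eta_a$ is a split epi; being also (by hypothesis on epi-reflections) epi with a section it is an isomorphism. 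Therefore $\varphi$ is isomorphic, as an arrow, to the $F$-cartesian arrow $\psi$ with codomain in $\cB$, and since $\cB$ is replete, $a\in\cB$ and $\varphi$ is in $\cB$. This is the step I expect to be the main obstacle: getting $\eta_a$ to be an isomorphism whenever $a$ admits an $F$-cartesian map into $\cB$ requires threading the full-replete and epi hypotheses together carefully.

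Next, (iii): to say $I$ is a cartesian functor over $\cC$ is to say $I$ preserves $FI$-cartesian maps, i.e.\ every $FI$-cartesian map $\varphi\colon b'\to b$ in $\cB$ is sent to an $F$-cartesian map. Given any $\chi\colon a\to Ib$ in $\cA$ over some $u\colon F(a)\to FI(b)$, form the $F$-cartesian lift $\bar\chi\colon u^*(Ib)\to Ib$ of $u$; by part (i), $u^*(Ib)$ and $\bar\chi$ lie in $\cB$, so $\bar\chi$ is in particular an $FI$-cartesian lift of $u$ in $\cB$. By uniqueness of cartesian lifts up to vertical iso in $\cB$, $\varphi\cong\bar\chi$ over $FI(b)$, and applying $I$ (which is fully faithful, being a replete embedding) sends this vertical iso to a vertical iso $I\varphi\cong\bar\chi=I\bar\chi$ in $\cA$; since $I\bar\chi$ is $F$-cartesian, so is $I\varphi$. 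This simultaneously establishes (ii): part (i) shows that for any $b\in\cB$ and any $u\colon c\to FI(b)$ in $\cC$, the $F$-cartesian lift of $u$ at $Ib$ already lives in $\cB$, hence serves as an $FI$-cartesian lift; so $FI$ is a fibration and its cartesian lifts are (up to the embedding $I$) exactly those of $F$. The verification that such an $F$-cartesian arrow landing in $\cB$ is automatically $FI$-cartesian — i.e.\ that the required unique factorizations of $\cB$-arrows through it stay inside $\cB$ — is immediate from fullness of $I$ together with $F$-cartesianness, since any $\cB$-arrow testing the universal property is in particular an $\cA$-arrow.

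I would present (i) as the genuine content and derive (ii) and (iii) from it in a few lines. The only subtlety worth flagging explicitly in the write-up is why "epi-reflection'' forces $\eta_a$ to be invertible on those objects $a$ lying under an $F$-cartesian arrow with codomain in $\cB$; everything else is a routine chase through cartesianness and the triangle identities, parallel to Lemma \ref{lemma:e_vertical}.
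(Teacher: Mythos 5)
Your reductions of (ii) and (iii) to (i) are sound and essentially match the paper (the paper proves (iii) by a longer direct comparison of lifts, but your ``uniqueness of cartesian lifts up to vertical iso'' shortcut works, granted (i) and the observation that $F$-cartesian arrows lying in $\cB$ are $FI$-cartesian). The problem is precisely the step you yourself flag as the main obstacle in (i): your argument that $\eta_a$ is invertible is broken, and at no point does it actually use the cartesianness of $\varphi$. The chain ``$\eta_{IRa}\cdot\eta_a=IR\eta_a\cdot\eta_a$, $\eta_{IRa}$ is iso, hence $\eta_a$ is split epi'' is a non sequitur: cancelling the epi $\eta_a$ only gives $IR\eta_a=\eta_{IRa}$, i.e.\ that $IR\eta_a$ is invertible, which says nothing about $\eta_a$ itself. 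Worse, that argument makes no reference to $\varphi$, so if it were valid it would prove that \emph{every} unit component is an isomorphism and the reflection trivial. Your opening move also misapplies cartesianness: being $F$-cartesian lets you factor arrows \emph{into} $Ib$ through $\varphi$ by arrows into $a$; it does not produce a $\psi\colon IRa\to Ib$ with $\psi\cdot\eta_a=\varphi$ (that existence comes from the universal property of the unit, as you note afterwards when constructing $\bar\varphi$).

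The missing idea is the paper's: since $\eta_a$ is $F$-vertical, the factorization $I\bar\varphi\colon IRa\to Ib$ is a lift of $F\varphi$ with the same codomain as $\varphi$, so cartesianness of $\varphi$ yields a unique vertical $\alpha\colon IRa\to a$ with $\varphi\cdot\alpha=I\bar\varphi$. Then $\varphi\cdot(\alpha\cdot\eta_a)=I\bar\varphi\cdot\eta_a=\varphi=\varphi\cdot\mathrm{id}_a$, and the uniqueness clause of cartesianness forces $\alpha\cdot\eta_a=\mathrm{id}_a$. Thus $\eta_a$ is a split monomorphism, and being an epimorphism by hypothesis it is an isomorphism; repleteness and full faithfulness of $I$ then finish (i). This is the one place where the cartesian hypothesis genuinely interacts with the epi-reflection, and it is exactly the piece your write-up lacks.
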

\begin{proof}[Proof of (i)]
Let us consider a cartesian arrow
$$
\xymatrix{
a\ar[r]^-{\bar{\kappa}}&Ib\,.
}
$$
What we aim to prove is that there exists a (unique) arrow $\kappa$ in $\cB$ such that $I\kappa=\bar{\kappa}$.
By the universal property of the unit of the adjunction, there exits a unique arrow
$\xymatrix{Ra\ar[r]^-{\tau}&b}$ in $\cB$, such that the following diagram commutes:
$$
\xymatrix{
a\ar[r]^-{\bar{\kappa}}\ar[d]_{\eta_a}
&Ib
\\
IRa\ar@{-->}[ur]_-{I\tau}
}
$$

Since $\eta_a$ is $F$-vertical, not only $\bar{\kappa}$, but also $I\tau$ is a lift of $F\bar{\kappa}$, hence $\bar{\kappa}$ cartesian implies that there exists a unique
$\xymatrix{IRa\ar[r]^-{\alpha}&a}$ such that $\bar{\kappa}\cdot \alpha = I\tau$. Composing, one gets
$$
\bar{\kappa}\cdot \alpha \cdot \eta_a= I\tau\cdot \eta_a =\bar{\kappa}\,.
$$
Now, $\bar{\kappa}$ is cartesian, so that uniqueness implies $\alpha\cdot \eta_a=id$, i.e.\ $\eta_a$ is a split-monomorphism. On the other hand, by hypothesis, it is also an epimorphism, and therefore an isomorphism. The result now follows from the fact that $I$ is replete and fully faithful.
\end{proof}

\begin{proof}[Proof of (ii)]
Given the pair $(b, \xymatrix{c\ar[r]^-{\gamma}&FIb})$, we are to find a $FI$-cartesian lift of $\gamma$ relative to $b$.
Since $F$ is a fibration, we actually have a $F$-cartesian lift of $\gamma$  relative to $Ib$, say an arrow $\xymatrix{a\ar[r]^-{\bar\kappa}&Ib}$.
By point (i) above this lies in $\cB$, i.e.\ there exists a unique $\kappa$ in $\cB$ such that $I\kappa=\bar\kappa$. We claim that such a $\kappa$ is $FI$-cartesian.
In order to prove this assertion, let us consider an arrow $\beta$ of $\cB$ with codomain $b$ and an arrow $\gamma'$ of $\cC$ such that $FI\beta=\gamma\cdot\gamma'$:
$$
\xymatrix@C=10ex{
b''\ar@/^3ex/[drr]^-{\beta}\ar@{-->}[dr]
\\
&b'\ar[r]^{\kappa}
&b\ar@{.}[d]
&\cB\ar[d]^{FI}
\\
c''\ar[r]_{\gamma'}
&c' \ar[r]_{\gamma }
&FIb
&\cC
}
$$
We apply the functor $I$ to $\beta$ and $\kappa$ above, and since $\bar\kappa$ is $F$-cartesian, there exists a unique arrow $$\xymatrix{Ib''\ar[r]^{\alpha'}&Ib'}$$ such that $F\alpha'=\gamma'$ and $I\kappa\cdot\alpha' =I\beta$.
Then, since $I$ is full and faithful, there exists a unique $\beta'$ such that ($I\beta'=\alpha'$, so that) $FI\beta'=\gamma'$ and $\kappa\cdot\beta'=\beta$.
\end{proof}

\begin{proof}[Proof of (iii)]
Given a $FI$-cartesian arrow $\xymatrix{b'\ar[r]^{\kappa}&b}$, we are to prove that $I\kappa$ is $F$-cartesian. Let $\bar{\kappa}$ be a $F$ cartesian lift of $FI\kappa$ relative to $Ib$ (it exists since $F$ is a fibration). Since $I\kappa$ is a lift of $FI\kappa$  relative to $Ib$,  we have a unique vertical arrow $\alpha$ such that $\bar{\kappa}\cdot \alpha=I\kappa$:
$$
\xymatrix@C=10ex{
Ib'\ar@{-->}[d]_{\alpha}\ar@/^2ex/[dr]^{I\kappa}
\\
\bar{a}\ar[r]^{\bar{\kappa}}
&Ib
&\cA\ar[d]^F
\\
FIb'\ar[r]_{FI\kappa}
&FIb
&\cC}
$$
By (i) above, there exists a unique arrow $\lambda$ in $\cB$ such that $I\lambda=\bar{\kappa}$. Furthermore, since $I$ is full and faithful, there exists a unique arrow $\beta$ in $\cB$ such that $I\beta=\alpha$. As a consequence, the commutative triangle in the above diagram is the image via $I$ of a commutative triangle in $\cB$:
$$
\xymatrix@C=10ex{
b'\ar[d]_{\beta}\ar@/^2ex/[dr]^{\kappa}
\\
\bar{b}\ar[r]^{\lambda}
&b
&\cB\ar[d]^{FI}
\\
FIb'\ar[r]_{FI\kappa}
&FIb
&\cC}
$$
Now, $\kappa$ is $FI$-cartesian by hypothesis, so that there exists a unique $FI$-vertical arrow $\beta'$ such that $\kappa \cdot \beta'=\lambda$. This in turns implies that $\beta'\cdot\beta =id_{b'}$. On the other hand, $I\lambda=\bar{\kappa}$ $F$-cartesian implies $I\beta\cdot I\beta' =id_{\bar{b}}$. Hence, since $I$ is full and faithful, one easily deduces that $\beta$ is a vertical isomorphism, so that $I\kappa$ is $F$-cartesian.
\end{proof}
Now we let opfibrations enter in the picture.
\begin{Proposition}\label{prop:opfib_reflection}
Let us consider diagram (\ref{diag:basic_sit}), where $(R\dashv I, \eta,\epsilon)$ is a full replete  epi-reflection,  $F$ is an opfibration and $\eta$ has $F$-vertical components. Then the following hold:
\begin{itemize}
\item[(iv)] $FI$ is an opfibration with  $FI$-opcartesian lifts obtained by reflecting $F$-opcartesian lifts;
\item[(v)] $R$ is an opcartesian functor over $\cC$.
\end{itemize}
\end{Proposition}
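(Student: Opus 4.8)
The plan is to dualize the proof of Proposition \ref{prop:fib_reflection}, since (iv) and (v) are exactly the opfibration counterparts of (ii) and (iii), and the hypotheses are self-dual except for the asymmetry hidden in ``$\eta$ has $F$-vertical components'' (which is \emph{not} the same as ``$\epsilon$ has $FI$-vertical components''). The first move is to observe that Lemma \ref{lemma:e_vertical} supplies exactly what is needed to make the dualization go through: although $\eta$ being $F$-vertical is not a self-dual hypothesis, the lemma tells us that $\epsilon$ is $FI$-vertical, and this is the fact we will actually use when reflecting opcartesian lifts.

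For (iv), given an object $b$ of $\cB$ and an arrow $\xymatrix{FIb\ar[r]^-{\gamma}&c}$ in $\cC$, I would take an $F$-opcartesian lift $\xymatrix{Ib\ar[r]^-{\bar\kappa}&a}$ of $\gamma$ relative to $Ib$ (available since $F$ is an opfibration), and then reflect it: set $\kappa := \epsilon_{?}$-composite, more precisely take $R\bar\kappa\colon RIb\to Ra$ and precompose/identify via the counit so as to obtain an arrow $Rib\to Ra$ out of $b$ (using $RI\cong\mathrm{id}$). Concretely, the candidate $FI$-opcartesian lift of $\gamma$ at $b$ is the arrow $b\cong RIb\xrightarrow{R\bar\kappa}Ra$ in $\cB$. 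To check it is $FI$-opcartesian, take any $\beta\colon b\to b'$ in $\cB$ and $\gamma'\colon c\to c'$ in $\cC$ with $FI\beta=\gamma'\cdot\gamma$; apply $I$, use that $\bar\kappa$ is $F$-opcartesian to get a unique vertical filler $Ia\to Ib'$ over $\gamma'$, use fullness and faithfulness of $I$ to pull it back down to $\cB$, and then use Lemma \ref{lemma:e_vertical} (the $FI$-verticality of $\epsilon$) to check that the triangle through $\epsilon$ and the unit isomorphism $RIb\cong b$ commutes on the nose over $\cC$, so that the factorization through the reflected lift has the required image $\gamma'$ under $FI$.

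For (v), I would show that $R$ sends $FI$-opcartesian arrows to $F$-opcartesian arrows. Given a $FI$-opcartesian $\kappa\colon b\to b'$ in $\cB$, consider an $F$-opcartesian lift $\bar\kappa$ of $F I\kappa$ relative to $Ib$ (it exists since $F$ is an opfibration); by the just-proved (iv)-style reflection argument (equivalently, by part (i) of Proposition \ref{prop:fib_reflection} read in the opfibration direction, noting that the \emph{codomain} issue is now handled because $F$-opcartesian arrows out of an object of $\cB$ reflect into $\cB$ via $R$), $\bar\kappa$ is comparable with $I\kappa$ through a vertical arrow; transport this comparison into $\cB$ using fullness and faithfulness of $I$, invoke $FI$-opcartesianness of $\kappa$ to get the reverse vertical arrow, and conclude the two compose to identities, so the comparison is an isomorphism and $I\kappa$ — hence, applying $R$ and using $RI\cong\mathrm{id}$, also $R\kappa$ up to the canonical iso — is $F$-opcartesian.

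The main obstacle I expect is bookkeeping the unit/counit isomorphisms correctly in the reflected lift: unlike the fibration case (ii), where the $FI$-cartesian lift is literally the same arrow $\bar\kappa$ living in the subcategory, here the $FI$-opcartesian lift is $R\bar\kappa$ \emph{precomposed with the unit isomorphism} $b\xrightarrow{\sim}RIb$, and one must check carefully that $FI(R\bar\kappa\cdot\eta_b^{-1})$ really equals $\gamma$ — this is where $F(\eta)=\mathrm{id}$ and, dually, the $FI$-verticality of $\epsilon$ from Lemma \ref{lemma:e_vertical} are both needed, and it is easy to apply the wrong one. Once the verticality of $\epsilon$ is in hand, the universal-property verifications are routine diagram chases formally dual to those in the proof of Proposition \ref{prop:fib_reflection}.
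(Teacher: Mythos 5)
Your treatment of (iv) is essentially the paper's argument: the paper also takes the $F$-opcartesian lift $\bar\mu\colon Ib\to a$ of $\gamma$, composes with the unit $\eta_a$, and uses fullness and faithfulness of $I$ to obtain the arrow $\mu\colon b\to Ra$ with $I\mu=\eta_a\cdot\bar\mu$ — which is the same arrow as your $R\bar\mu$ precomposed with the canonical isomorphism $b\cong RIb$, by naturality of $\eta$ and the triangle identities. The only difference is cosmetic: the paper never needs the counit or Lemma \ref{lemma:e_vertical} here, since defining $\mu$ through $I$ lets one verify $FI\mu=\gamma$ directly from $F(\eta_a)=\mathrm{id}$, which is cleaner than your bookkeeping through $\epsilon_b^{-1}$.

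For (v), however, you have proved the wrong statement. The reflector $R$ goes from $\cA$ to $\cB$, so ``$R$ is an opcartesian functor over $\cC$'' means: for every $F$-opcartesian arrow $\mu\colon a\to a'$ \emph{of $\cA$}, the arrow $R\mu\colon Ra\to Ra'$ is $FI$-opcartesian in $\cB$. What you argue instead is that for $\kappa$ $FI$-opcartesian in $\cB$, the arrow $I\kappa$ is $F$-opcartesian in $\cA$ — i.e.\ that $I$ is an \emph{op}cartesian functor, the dual of (iii). That statement is false in the motivating example: for a relation $S$ on $X$ and $f\colon X\to Y$, the $V$-opcartesian lift $S\to R(f_!(S))$ is not $U$-opcartesian in $\RGph(\cC)$, because the genuine $U$-opcartesian lift is $S\to f_!(S)$ and the two differ by the (generally non-invertible) quotient $\eta_{f_!(S)}$. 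Your argument breaks exactly where you wave at ``the codomain issue'': the dual of (i) fails here — an $F$-opcartesian arrow with domain in $\cB$ need \emph{not} lie in $\cB$; that is precisely why opcartesian lifts must be \emph{reflected} rather than inherited — so the comparison arrow $\alpha\colon a\to Ib'$ has domain outside $\cB$ and cannot be ``transported into $\cB$ using fullness and faithfulness of $I$''. The correct proof (as in the paper) starts from an $F$-opcartesian $\mu\colon a\to a'$ and a test arrow $\beta\colon Ra\to b$ in $\cB$ over $\gamma\cdot F\mu$, precomposes $I\beta$ with $\eta_a$, factors through $\mu$ by opcartesianness, factors the result through $\eta_{a'}$ by the universal property of the unit, and then verifies $\beta'\cdot R\mu=\beta$ by cancelling $\eta_a$ — using, crucially, that the unit is an \emph{epimorphism}, a hypothesis your sketch never invokes and which has no role in a purely ``dualize (i)--(iii)'' strategy.
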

\begin{proof}[Proof of (iv)]
Given the pair $(b, \xymatrix{FIb\ar[r]^-{\gamma}&c})$, we are to find a $FI$-opcartesian lift of $\gamma$  relative to $b$.
Since $F$ is an opfibration, we actually have a $F$-opcartesian lift of $\gamma$ relative  to $Ib$, say an arrow $\xymatrix{Ib\ar[r]^-{\bar\mu}&a}$. Let us consider its composition with the unit of the adjunction $\eta_a\cdot\bar{\mu}$. Since $\eta_a$ is $F$-vertical, $\eta_a\cdot\bar{\mu}$ still lifts $\gamma$, and $I$ full and faithful implies the existence of a unique $\xymatrix{b\ar[r]^{\mu}&Ra}$ such that $I\mu=\eta_a\cdot\bar{\mu}$. Let us prove that $\mu$ is $FI$-opcartesian.
To this end, we consider and arrow $\beta$ of $\cB$ with domain $b$ and an arrow $\gamma'$ of $\cC$ such that $FI(\beta)=\gamma'\cdot \gamma$:
$$
\xymatrix@C=10ex{
&&b'\\
b\ar@/^3ex/[urr]^{\beta}\ar[r]^{\mu}
&Ra\ar@{-->}[ur]
&&\cB\ar[d]^{FI}
\\
FIb\ar[r]_-{\gamma}
&c\ar[r]_-{\gamma'}
&c'
&\cC
}
$$
If we apply the functor $I$ to $\beta$ and $\mu$ above, we obtain the following diagram of solid arrows in $\cA$ (over $\cC$):
$$
\xymatrix@C=10ex{
&&&Ib'\\
Ib\ar@/^3ex/[urrr]^{I\beta}\ar[r]^{\bar\mu}
&a\ar@{-->}@/^1ex/[urr]^{\alpha}\ar[r]^{\eta_a}
&IRa\ar@{-->}[ur]_{I\beta'}
&&\cA\ar[d]^{F}
\\
FIb\ar[r]_-{\gamma}
&c\ar@{=}[r]
&c\ar[r]_-{\gamma'}
&c'
&\cC
}
$$
Now, since $\bar\mu$ is $F$-opcartesian, there exists a unique $\alpha$ as above, such that $F\alpha=\gamma'$ and $\alpha\cdot\bar{\mu} = I\beta$. By the universal property of the unit of the adjunction, there exists a unique $\beta'$ in $\cB$ such that $I\beta'\cdot\eta_a=\alpha$. Then, $I(\beta'\cdot \mu)=I\beta'\cdot I\mu=I\beta'\cdot \eta_A\cdot \bar{\mu}=\alpha\cdot \bar{\mu}=I\beta$. Moreover, since $\eta$ vertical, $FI\beta'=FI\beta'\cdot F\eta_a=F(I\beta'\cdot \eta_a)=F\alpha=\gamma'$, and this concludes the proof.
\end{proof}
\begin{proof}[Proof of (v)]
Given an $F$-opcartesian arrow $\xymatrix{a\ar[r]^-{\mu}&a'}$, we are to prove that $R\mu$ is $FI$-opcartesian over $F\mu$.
First of all, we check that $R\mu$ lifts $F\mu$. Indeed, by the naturality of units, $IR\mu\cdot\eta_a=\eta_{a'}\cdot \mu$, and since the units are $F$-vertical, this implies $FIR\mu=F\mu$. Now, let us consider the situation described by the following diagram of solid arrows:
$$
\xymatrix@C=10ex{
&&b\\
Ra\ar@/^3ex/[urr]^{\beta}\ar[r]^{R\mu}
&Ra'\ar@{-->}[ur]
&&\cB\ar[d]^{FI}
\\
Fa\ar[r]_-{F\mu}
&Fa'\ar[r]_-{\gamma}
&c'
&\cC
}
$$
where $\beta$ is such that $FI\beta=\gamma\cdot F\mu$. If we apply the functor $I$
to the upper part of the  diagram above, we can consider the following commutative diagram of solid arrows in $\cA$:
$$
\xymatrix@C=10ex{
&&Ib\\
IRa\ar@/^3ex/[urr]^{I\beta}\ar[r]^{IR\mu}
&IRa'\ar@{-->}[ur]_{I\beta'}
\\
a\ar[r]_-{\mu}\ar[u]^{\eta_a}
&a'\ar[u]_-{\eta_{a'}}\ar@{-->}@/_3ex/[uur]_{\alpha}
}
$$
Since $\mu$ is opcartesian, there exists a unique $\alpha$ as above such that $\alpha\cdot\mu=I\beta\cdot \eta_a$ and $F\alpha=\gamma$. Moreover,  since $\eta_{a'}$ is a unit, there exists a unique $\beta'$ such that $I\beta'\cdot \eta_{a'}=\alpha$.
Now we claim that $I\beta'\cdot IR\mu= I\beta$. Indeed, $I\beta'\cdot\eta_a=\alpha\cdot\mu=I\beta'\cdot \eta_{a'}\cdot \mu=I\beta'\cdot IR\mu\cdot \eta_a$, and $\eta_a$ is an epimorphism.
\end{proof}

Previous propositions have the following corollary.
\begin{Corollary}\label{coroll:bifib_reflection}
Let us consider diagram (\ref{diag:basic_sit}), where $(R\dashv I, \eta,\epsilon)$ is a full replete  epi-reflection,  $F$ is an bifibration and $\eta$ has $F$-vertical components. Then the following hold:
\item[(vi)] $FI$ is an bifibration with the same cartesian lifts as $F$ and $FI$-opcartesian lifts obtained by reflecting $F$-opcartesian lifts;
\item[(vii)] $I$ is a cartesian functor over $\cC$, $R$ is an opcartesian functor over $\cC$.
\end{Corollary}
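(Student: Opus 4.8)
The plan is simply to unwind the definition of bifibration and feed it into the two preceding propositions. Recall that a bifibration is a functor that is \emph{both} a fibration and an opfibration; moreover, the hypotheses of Corollary \ref{coroll:bifib_reflection} — that $(R\dashv I,\eta,\epsilon)$ is a full replete epi-reflection and that $\eta$ has $F$-vertical components — are precisely the common hypotheses of Propositions \ref{prop:fib_reflection} and \ref{prop:opfib_reflection}. So the whole argument is an application of these two results, with essentially no new content.

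First I would apply Proposition \ref{prop:fib_reflection} to $F$ viewed as a fibration. Part (ii) of that proposition gives that $FI$ is a fibration whose cartesian lifts coincide with the $F$-cartesian lifts, and part (iii) gives that $I$ is a cartesian functor over $\cC$ (part (i), stating that $F$-cartesian maps with codomain in $\cB$ already lie in $\cB$, is what makes the construction work internally). Then I would apply Proposition \ref{prop:opfib_reflection} to $F$ viewed as an opfibration: part (iv) gives that $FI$ is an opfibration whose $FI$-opcartesian lifts are obtained by reflecting (i.e.\ composing with the unit of the adjunction) the $F$-opcartesian lifts, and part (v) gives that $R$ is an opcartesian functor over $\cC$.

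Combining the two, $FI$ is at once a fibration and an opfibration, hence a bifibration, with the cartesian and opcartesian lift descriptions as asserted in (vi); and the functoriality statements assemble into (vii). The only point worth a remark is that there is nothing to reconcile between the two halves: the cartesian lifts (supplying the fibration structure) and the opcartesian lifts (supplying the opfibration structure) are independent pieces of data on $FI$, so no compatibility condition needs to be checked. Consequently I do not expect any genuine obstacle here — the statement is a bookkeeping corollary whose proof is a two-line citation of Propositions \ref{prop:fib_reflection} and \ref{prop:opfib_reflection}.
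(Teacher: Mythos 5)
Your proposal is correct and matches the paper exactly: the paper offers no separate proof, introducing the corollary only with ``Previous propositions have the following corollary,'' i.e.\ it is precisely the juxtaposition of Proposition \ref{prop:fib_reflection}(ii)--(iii) and Proposition \ref{prop:opfib_reflection}(iv)--(v) under the shared hypotheses. Your remark that no compatibility between the cartesian and opcartesian structures needs to be checked is also accurate.
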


\section{Some (op)fibrational properties of reflexive \\ graphs and reflexive relations}\label{sec:Reflexive graphs and relations}

In this section, we describe how the category of internal reflexive relations  can be seen a regular epi-reflective subcategory of the category of internal reflexive graphs.
With the help of Corollary \ref{coroll:bifib_reflection}, this will allow us to deduce from the well-known bifibration of objects of internal graphs, a new bifibration of objects of internal reflexive relations.

\subsection{Reflexive graphs and reflexive relations}\label{ssec:reflexive_rel}

First we recall the definition of the  functor
$$
U\colon\xymatrix{\RGph(\cC)\ar[r]& \cC}
$$
given by the assignment $\bA=(A_1,A_0,d,c,e)\mapsto A_0$.
Then, if $A_0$ is an object of  $\cC$, we denote by  $\RGph_{A_0}(\cC)$ the fibre of $U$ over $A_0$, i.e.\ the subcategory of  $\RGph(\cC)$
of reflexive graphs with fixed object of vertexes $A_0$, and morphisms inducing identity on $A_0$.

\begin{nsc}
When $\cC$ has finite limits, the functor $U$ is a fibration, with cartesian maps given by \emph{fully faithful morphisms of graphs},
i.e.\ those $\xymatrix{F \colon \bA \ar[r] &\bB}$ such that the following diagram is a pullback:
\begin{equation}\label{diag:RGph_fully_faith}
\begin{aligned}
\xymatrix@C=10ex{
A_1\ar[d]_{\langle d,c\rangle}\ar[r]^{f_1}
&B_1\ar[d]^{\langle d,c\rangle}
\\
A_0\times A_0 \ar[r]_{f_0\times f_0}
&B_0\times B_0
}
\end{aligned}
\end{equation}
Consequently, for $\xymatrix{f\colon A_0\ar[r] &B_0,}$ a change of base functor
$$
\xymatrix{f^*\colon \RGph_{B_0}(\cC)\ar[r]&\RGph_{A_0}(\cC)}
$$
is defined by taking the joint pullback of $d$ and $c$ along $f$:
\begin{equation}\label{diag:RGph_cart}
\begin{aligned}
\xymatrix@C=12ex{\ar@{}[dr]|(.3){\lrcorner}
f^*(B_1)\ar[d]_{\langle d,c\rangle}\ar[r]^{\tilde f}
&B_1\ar[d]^{\langle d,c\rangle}
\\
A_0\times A_0 \ar[r]_{f\times f}
&B_0\times B_0
}
\end{aligned}
\end{equation}

\end{nsc}

\begin{nsc}
When $\cC$ has pushouts of split monomorphisms along arbitrary morphisms,  the functor $U$ is an opfibration, with opcartesian maps given by diagrams
as (\ref{diag:RGph_morphism}), with $$e\cdot f_0 = f_1 \cdot e$$ a pushout.

Consequently, for $\xymatrix{f\colon A_0\ar[r] &B_0,}$ a change of base functor
$$
\xymatrix{f_!\colon \RGph_{A_0}(\cC)\ar[r]&\RGph_{B_0}(\cC)}
$$
is defined by the following construction using the universal property of the pushout:
\begin{equation}\label{diag:RGph_opcart}
\begin{aligned}
\xymatrix@C=10ex{
A_0\ar[r]^{f}
&B_0
\\
A_1\ar[r]^{\hat{f}}\ar@<+.6ex>[u]^{d}\ar@<-.6ex>[u]_{c}
&f_!(A_1) \ar@<+.6ex>@{.>}[u]^{d}\ar@<-.6ex>@{.>}[u]_{c}
\\
A_0 \ar[r]_{f}\ar[u]^{e}\ar@{}[ur]|(.7){\llcorner}
&B_0\ar[u]_{e}
}
\end{aligned}
\end{equation}
where we have used the fact that $e$ is a common section of both $d$ and $c$.
\end{nsc}

\begin{nsc}
Notice that if $\cC$ has pullbacks and pushouts of split monomorphisms, $U$ is at the same time a fibration and an opfibration, \emph{i.e.}\ it is a \emph{bifibration}. As a consequence,  every
$\xymatrix{f\colon A_0\ar[r] &B_0}$ determines an adjoint pair
$$
\xymatrix@C=10ex{\RGph_{B_0}(\cC)\ar@<-1ex>[r]_{f^*}\ar@{}[r]|{\bot}
&\RGph_{A_0}(\cC)\ar@<-1ex>[l]_{f_!}\,.
}$$
\end{nsc}

\begin{nsc}
If the category $\cC$ is regular, then the functor $I$ has a left adjoint regular epi reflection
$$
\xymatrix{
R\colon \RGph(\cC)\ar[r]&\RRel(\cC)\,,
}
$$
where, for a reflexive graph $(A_1,A_0,d,c,e)$, the reflector is given by the regular epi-mono factorization of the map $\langle d,c \rangle$:
$$
\xymatrix{
A_1\ar@{->>}[r]^-{\eta_{A}}\ar[d]_{\langle d,c \rangle}
&R(A_1)\ar[dl]
\\
A_0\times A_0
}
$$
This construction specializes a result by Xarez for internal graphs and relations, see \cite{Xa04}.
\end{nsc}

\begin{nsc}\label{nsc:main_example}
Notice that the components of the unit of the adjunction $R\dashv I$ are vertical with respect to $U$, hence we can apply
 Proposition \ref{prop:fib_reflection} and Proposition \ref{prop:opfib_reflection}. The situation is described by the diagram below
$$
\xymatrix{
\RGph(\cC)\ar[dr]_{U}\ar@<+1ex>[rr]^{R}\ar@{}[rr]|{\bot}
&&\RRel(\cC)\ar@{-->}[dl]^{V=U\cdot I}\ar@<+1ex>[ll]^{I}
\\
&\cC
}
$$
Let us recall that, with any fixed object $X$ of $\cC$, it is possible to associate the discrete relation
$\Delta X=(X,1_X,1_X)$ and the codiscrete relation $\nabla X=(X\times X,p_1,p_2)$. They are
the initial and the final objects in the fibre $\RRel_X(\cC)$, as well as in the fibre $\RGph_X(\cC)$.
\end{nsc}
\begin{nsc}
Concerning the fibrations, Proposition \ref{prop:fib_reflection} and Proposition \ref{prop:opfib_reflection} recover two well known facts:
\begin{itemize}
\item $V$ has the same cartesian lifts as $U$, i.e.\ fully faithful morphisms of reflexive relations;
\item a fully faithful morphism of reflexive graphs with codomain a reflexive relation has domain a reflexive relation.
\end{itemize}
\end{nsc}

\begin{nsc}
Concerning the opfibrations, for the reader's convenience we describe the construction of opcartesian lifts along $V$.

To this end, let us consider an arrow $\xymatrix{X\ar[r]^f&Y}$ of $\cC$, and a relation $(S,s_1,s_2)$ on $X$.
The opcartesian lift of $f$  relative to $S$ with respect to $V$ is the morphism of reflective relations $(\eta\cdot \hat{f},f)$.
Its construction is displayed in the following diagram:
\begin{equation}\label{diag:RRel_opcart}
\begin{aligned}\xymatrix{
S\ar@<-.5ex>[d]_{r_1}\ar@<+.5ex>[d]^{r_2}\ar[r]^-{\hat{f}}
&f_{!}(S)\ar@<-.5ex>[d]\ar@<+.5ex>[d]\ar@{->>}[r]^-{\eta}
&R(f_{!}(S))\ar@<-.5ex>[dl]\ar@<+.5ex>[dl]
\\
X\ar[r]_-{f}&Y
}\end{aligned}
\end{equation}
where $(\hat{f},f)$ is the opcartesian lift of $f$ in $\RGph(\cC)$ and $\eta=\eta_{f_{!}(S)}$ is  the unit of the reflection.

\end{nsc}
\begin{nsc}\label{nsc:r_cartesian}
The situation described in \ref{nsc:main_example} is better behaved than the formal setting of Proposition \ref{prop:fib_reflection}, since  in \ref{nsc:main_example} both $I$ and $R$ are cartesian functors. This assertion is a straightforward consequence of the pull-back stability of the regular-epi/mono factorization in a regular category. A consequence of this fact is that the adjunction $R\dashv I$ actually lives in $\text{Fib}(\cC)$. Using the terminology of \cite{Borceux94.2}, it is called a \emph{fibred adjunction}.
However, this extra feature of $R$ will not be used in our work, since the constructions we need rely only on the formal setting developed in the previous section.
\end{nsc}

\subsection{An equivalence relations associated with a Bourn-normal mono\-morphism}
The previous discussion on the (co)fibrational aspects of reflexive relations (Section \ref{ssec:reflexive_rel}) indicates  a way for associating an equivalence relation with a given Bourn-normal monomorphism.

\begin{Proposition}\label{prop:Rel(n)}
Let $\cC$ be a Mal'tsev regular category, with pushout of split monomorphisms. Then the assignment
$$\Rel(n)=R(n_!(\nabla N))$$
  defines a  functor
$$
\xymatrix{\Rel\colon\mathsf{Arr}(\cC)\ar[r]& \EqRel(\cC)}
$$
that associates with the arrow $\xymatrix{N\ar[r]^{n}&X}$ of $\cC$, the equivalence relation $\Rel(n)$ on $X$.

If restricted to Bourn-normal monomorphisms,   this functor
is faithful, and $\Rel(n)$ is an equivalence relation to which $n$ is Bourn-normal.
More precisely,  $\Rel(n)$ is the initial equivalence relation among the equivalence relations to which  $n$ is Bourn-normal.
\end{Proposition}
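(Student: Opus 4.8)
The plan is to dissect the statement into four assertions and handle them in sequence, leaning on the bifibrational machinery of Section 4.

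\textbf{Functoriality.} First I would check that $\Rel(n)=R(n_!(\nabla N))$ is functorial on $\mathsf{Arr}(\cC)$. The assignment $N\mapsto\nabla N$ is functorial $\cC\to\RRel(\cC)$ (it is the final-object section of the fibration $V$, i.e.\ $\nabla$ is right adjoint to $V$, or more concretely $\nabla$ of a morphism $u\colon N\to N'$ is $u\times u$). Composing with the opcartesian-lift functoriality is the subtle point: $n_!$ depends on $n$, so I would instead argue directly that a commutative square in $\mathsf{Arr}(\cC)$ induces, via the universal property of the pushout defining $n_!$ in $\RGph(\cC)$ (diagram (\ref{diag:RGph_opcart})) and then the reflection $R$, a unique morphism of relations over the bottom edge. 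In other words, $\Rel$ is the composite of $\nabla\colon\cC\to\RRel(\cC)\subseteq\RGph(\cC)$, the ``opcartesian closure'' that sends each object of $\RGph(\cC)$ along $U$-maps, and $R$; functoriality is the standard fact that opcartesian lifts compose and are unique up to the universal property, so a square maps to a square. That $\Rel(n)$ is an \emph{equivalence} relation (not merely reflexive) is immediate: $\cC$ is Mal'tsev, so $\RRel(\cC)=\EqRel(\cC)$ by \ref{nsc:embedding_EqRel-RRel}.

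\textbf{$n$ is Bourn-normal to $\Rel(n)$.} This is the core computation. Using the reformulation from Section 4, I must show (a) $n^*(\Rel(n))=\nabla N$ and (b) the cartesian lift $\tilde n\colon\nabla N\to\Rel(n)$ is a discrete opfibration. For (a): by Corollary \ref{coroll:bifib_reflection}, $V$ is a bifibration with the same cartesian lifts as $U$, and $I$, $R$ are cartesian/opcartesian functors over $\cC$; the opcartesian lift of $n$ at $\nabla N$ is $(\eta\cdot\hat n, n)$ as in (\ref{diag:RRel_opcart}). Now $\nabla N=n^*(\nabla X)$ since $\nabla$ is fibred (pullback of a product is a product), so there is a unit-counit-type comparison $\nabla N\to n^*(\Rel(n))$; to see it is an isomorphism I would use that $\cC$ is Mal'tsev and regular, so that the composite $\nabla N\to n_!(\nabla N)\to X\times X$ followed by pullback along $n\times n$ recovers $\nabla N$ — concretely because $R$ is the regular-epi/mono factorization of $\langle d,c\rangle$ and regular epis are pullback-stable, and because in a Mal'tsev category the relation generated by the image is already an equivalence relation whose restriction along $n$ is codiscrete. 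For (b): I would show $\tilde n$ is a discrete opfibration by combining the opcartesianness of $\hat n$ in $\RGph(\cC)$ (a pushout square on the "$e$" components) with the fact that in a \emph{Mal'tsev} category any morphism of equivalence relations that is "fully faithful onto its image after reflecting" becomes a discrete (op)fibration; the key leverage is that $\nabla N$ is codiscrete, so the square $r_1\cdot\tilde n = n\cdot p_1$ being a pullback reduces to a statement about $n\times n$ and the mono part of $R(n_!(\nabla N))$, which holds by pullback-stability of the factorization.

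\textbf{Initiality and faithfulness.} For initiality: suppose $n$ is Bourn-normal to some equivalence relation $T$ on $X$. Then by the Section-4 reformulation the cartesian lift $\tilde n\colon\nabla N=n^*(T)\to T$ is a discrete opfibration; I claim this exhibits a morphism $n_!(\nabla N)\to I(T)$ in $\RGph(\cC)$ over $X$ by the universal property of the opcartesian lift (the pushout (\ref{diag:RGph_opcart})), which upon applying $R$ and using that $T$ is already a relation (so $R I(T)=T$) yields the required comparison $\Rel(n)=R(n_!(\nabla N))\to T$ over $\id_X$, i.e.\ $\Rel(n)\leq T$ in the fibre $\EqRel_X(\cC)$. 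Uniqueness of such a comparison is automatic since the fibres are posets of subobjects of $X\times X$. Faithfulness on $\mathsf{N}(\cC)$ then follows because a morphism in $\mathsf{N}(\cC)$ is determined by its underlying arrow $n$ (Bourn-normal maps are monos), and $\Rel$ acts as identity-on-objects-style on the base component.

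\textbf{Main obstacle.} I expect the hard part to be (b) — verifying that the cartesian lift $\tilde n\colon\nabla N\to\Rel(n)$ is a discrete opfibration, equivalently that the square $r_1\cdot\tilde n=n\cdot p_1$ is a pullback. The difficulty is that $\Rel(n)$ is built by a pushout followed by an image factorization, so one does not have an explicit handle on its "arrow object"; the argument must exploit the Mal'tsev hypothesis (to know the reflexive relation obtained is already transitive/symmetric) together with regularity (pullback-stability of regular epis) to identify the relevant pullback, rather than computing it directly. The interplay between the pushout defining $n_!$ and the subsequent reflection $R$ is where care is needed, since neither alone gives the discrete-opfibration property.
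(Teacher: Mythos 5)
Your decomposition is sensible, and the functoriality, faithfulness and initiality paragraphs are essentially correct --- in particular your initiality argument (factor the discrete fibration $\nabla N\to T$ through the $V$-opcartesian lift to get a comparison $\rho\colon\Rel(n)\to T$ over $1_X$, monic because the fibres are posets of subobjects of $X\times X$) is exactly the paper's. But the step you yourself flag as the ``core computation'' has a genuine gap: you try to verify directly that $n^*(\Rel(n))=\nabla N$ and that the cartesian lift is a discrete opfibration, and nowhere in that argument do you use the hypothesis that $n$ is Bourn-normal to \emph{some} equivalence relation. That hypothesis is indispensable, because for an arbitrary monomorphism the conclusion is false: in $\Gp$, for a non-normal subgroup $N\leq X$, $\Rel(n)$ is the congruence generated by $N\times N$, i.e.\ the kernel pair of $X\to X/\bar{N}$ with $\bar{N}$ the normal closure of $N$, and its class at the identity is $\bar{N}\neq N$, so $n$ is not Bourn-normal to $\Rel(n)$. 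Consequently the ``fact'' you invoke (that in a Mal'tsev category a morphism of equivalence relations that is fully faithful onto its image after reflecting is automatically a discrete (op)fibration) cannot hold, and no amount of pullback-stability of the regular-epi/mono factorization will produce the required pullback square $r_1\cdot\tilde n=n\cdot p_1$ without the Bourn-normality hypothesis entering somewhere.

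The repair is the observation you already made but deployed only for initiality. Take any $S$ to which $n$ is Bourn-normal, with its discrete fibration $m\colon\nabla N\to S$; by opcartesianness this factors as $m=\rho\cdot\hat{n}$ through the $V$-opcartesian lift $\hat{n}\colon\nabla N\to\Rel(n)$, with $\rho$ a monomorphism of relations over $1_X$. Then an elementary pullback-cancellation lemma (if the outer rectangle is a pullback and the right-hand square has jointly monic legs, the left square is a pullback --- Lemma \ref{lemma:elementary} in the paper) shows that $\hat{n}$ is itself a discrete fibration. This converts the existential hypothesis directly into the Bourn-normality of $n$ to $\Rel(n)$, with no computation inside $n_!(\nabla N)$ at all, and the same factorization $\rho$ gives initiality as you observed.
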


\begin{proof}
Since we defined $\Rel(n)$ as the codomain of the opcartesian lift of $n$ relative to $\nabla N$ (with respect to the functor $V$ of \ref{nsc:main_example}), it obviously extends to a functor $\xymatrix{\mathsf{Arr}(\cC)\ar[r]& \EqRel(\cC).}$ Furthermore, when restricted to monomorphisms, this functor is clearly faithful.

It remains to check is that when $n$ is a Bourn-normal monomorphism, then it is Bourn-normal to $\Rel(n)$. Let  $S$ be a relation to which $n$ is Bourn-normal, together with the discrete fibration $\xymatrix{m=(m,n)\colon\nabla N\ar[r]&S.}$ Let us stress that, although we do not know $S$ and $m$ \emph{a priori}, we know that one such a pair $(S, m)$ exists in $\EqRel(\cC)$,  $n$ \emph{being} Bourn-normal.
Since the  discrete fibration $m$ is a morphism over $n$, there is a univocally determined factorization:
$$
\xymatrix{
m\colon \nabla N\ar[r]^-{\hat{n}}
&R(n_!(\nabla N))\ar[r]^-{\rho}
&S
}
$$
through the $V$-opcartesian lift $\hat{n}$ of $n$.
We have to prove that the $\hat{n}$ is a discrete fibration, but, since $\rho$ is mono, this follows by the Lemma \ref{lemma:elementary} below.

Finally, we observe that the monomorphism $\xymatrix{\rho\colon \Rel(n)\ar[r]&S}$ together with the arbitrary choice of $S$ shows that $\Rel(n)$ is initial.
\begin{Lemma}\label{lemma:elementary}
Let us consider the following diagram in the category $\cC$:
$$
\xymatrix{
\bullet\ar[r]\ar[d]\ar@{}[dr]|{(i)}
&\ar[r]^u\ar[d]_v\ar@{}[dr]|{(ii)}
&\bullet\ar[d]
\\
\bullet\ar[r]
&\bullet\ar[r]_{v}
&\bullet
}
$$
with $u$ and $v$ jointly monic. Then, if $(i)+(ii)$ is a pullback, also
$(i)$ is a pullback.
\end{Lemma}
\end{proof}
In \cite{Bo00b} Bourn proved that, in the protomodular case, if a monomorphism is Bourn-normal to an equivalence relation, then such a relation is essentially unique. Therefore it is immediate to state the following corollary.
\begin{Corollary}
If the base category $\cC$ is not only Mal'tsev, but protomodular, then, for a Bourn-normal monomorphism $n$ as above, $\Rel(n)$ is the essentially unique equivalence relation associated with $n$. In other words, the monomorphism $\rho$ is actually an isomorphism.
\end{Corollary}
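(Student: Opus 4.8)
The plan is to derive the corollary formally from two facts already in hand: the initiality of $\Rel(n)$ among the equivalence relations to which $n$ is Bourn-normal (Proposition \ref{prop:Rel(n)}), and Bourn's essential-uniqueness theorem in the protomodular case (recalled just above). No computation should be required.

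First I would unwind the setup of the proof of Proposition \ref{prop:Rel(n)}: fix an arbitrary equivalence relation $S$ to which $n$ is Bourn-normal, with associated discrete fibration $m\colon\nabla N\to S$, and factor $m=\rho\cdot\hat n$ through the $V$-opcartesian lift $\hat n$ of $n$ relative to $\nabla N$. Recall that $\rho\colon\Rel(n)\to S$ is then a monomorphism over $\id_X$, and it is precisely the structural arrow exhibiting $\Rel(n)$ as initial. Since $\cC$ is protomodular, Bourn's theorem provides an isomorphism $\phi\colon\Rel(n)\to S$ of equivalence relations; the goal is to see that $\rho$ must coincide with $\phi$, hence be invertible. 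This I would obtain formally: $\phi^{-1}\cdot\rho$ is an endomorphism of the initial object $\Rel(n)$ in the category of equivalence relations to which $n$ is Bourn-normal, hence equals $\id_{\Rel(n)}$, so $\rho=\phi$. (Equivalently: equivalence relations on $X$ form a preorder under inclusion into $X\times X$, so a monomorphism over $\id_X$ between two of them that are isomorphic over $\id_X$ is automatically an isomorphism.)

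The only point requiring care — and the place where I expect to have to be precise about what "essential uniqueness" means — is the compatibility of Bourn's isomorphism $\phi$: I need it to be a morphism in the same category in which $\Rel(n)$ is initial, i.e.\ to lie over $\id_X$. This is legitimate because $\Rel(n)$ and $S$ are equivalence relations on the \emph{same} object $X$, and in the protomodular setting the relation Bourn-normal to $n$ is unique as a subobject of $X\times X$, not merely up to abstract isomorphism of internal reflexive graphs. Granting this, "initial plus essentially unique" forces $\rho$ to be an isomorphism, which is exactly the assertion of the corollary.
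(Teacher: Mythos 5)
Your proposal is correct and follows exactly the route the paper intends (the paper offers no written proof, deeming the corollary immediate from Bourn's essential-uniqueness theorem combined with the monomorphism $\rho$ of Proposition \ref{prop:Rel(n)}). Your care in checking that Bourn's isomorphism lies over $\id_X$, and the observation that the fibre $\EqRel_X(\cC)$ is a preorder so that $\rho$ must coincide with that isomorphism, is precisely the detail the paper leaves implicit.
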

\begin{Remark}
The functor $R$ defined above can be considered as a \emph{direct image} functor $\cC/X\to \EqRel_X(\cC)$ in the sense of \cite{Law70}. It can be deduced from Theorem \ref{thm:The_Theorem} that such a functor admits a right adjoint if and only if we restrict it to Bourn-normal monomorphisms with  null support. However, in this case, the unit of the adjunction is an isomorphism.
\end{Remark}
\begin{Example}\label{expl:Rel_in_Gp}
It is worth examining the situation in the semi-abelian (hence protomodular) category of groups. Hence, let us consider a  group $X$ together with a normal inclusion $n\colon N\hookrightarrow X$ of the normal subgroup $N$. We  compute the amalgamated product
$$
n_!(\nabla N) = (N\times N) *_{N} X
$$
From elementary group theory, we know that this amounts to the free product of $(N\times N)$ and $X$ over (its normal subgroup generated by) the relations $(n,n);n^{-1}=1$, for all $n\in N$.
The second step is the construction of  $R(n_!(\nabla N))$. This can be done by taking the image of the group homomorphism
$$
n_!(\nabla N)\to X\times X
$$
generated by the assignments
$$
(n,n')\mapsto (n,n')\,,\qquad x\mapsto (x,x)\,.
$$
For instance, the string
$$(n_1,n_1');x_1;(n_2,n_2');x_2;\,\cdots\,;(n_k,n_k');x_k$$
will be sent to the pair
\begin{equation}\label{eq:Rel_in_Gp}
(n_1 x_1  n_2 x_2  \cdots  n_k x_k \, ,\,\, n_1' x_1  n_2' x_2  \cdots  n_k' x_k)
\end{equation}
Now,  since $N$ is normal in $X$,
\begin{eqnarray*}
n_1 x_1  n_2 x_2 n_3 x_3 \cdots  n_k x_k &=&
x_1 x_1^{-1} n_1 x_1  n_2 x_2 n_3 x_3 \cdots  n_k x_k \\
&=& x_1\bar{n}_2 x_2 n_3 x_3 \cdots  n_k x_k \\
&=& x_1 x_2 x_2^{-1}  \bar{n}_2 x_2 n_3 x_3 \cdots  n_k x_k \\
&=& x_1 x_2 \bar{n}_3 x_3  \cdots  n_k x_k  \\
&& \cdots\\
&=& x_1 x_2 x_3 \cdots x_{k-1} \bar{n}_k x_k \\
&=& x_1 x_2 x_3 \cdots x_k \bar{n}= \bar{x}\bar{n}
\end{eqnarray*}
so that the pair (\ref{eq:Rel_in_Gp}) can be written in the form $(\bar{x}\bar{n},\bar{x}\bar{n}')$, and both $\bar{x}\bar{n}$ and $\bar{x}\bar{n}'$ lie in the same coset $\bar{x}N$ of the normal subgroup $N$ of $X$.

Indeed, it is easy to show that every element of $(N\times N) *_{N} X$ can be expressed in this form, and that the condition is also sufficient;  equivalently, that the canonical monomorphism
$$
R(n_!(\nabla N))\to R_N
$$
is an isomorphism, where $R_N$ is the kernel pair relation of the canonical projection $X\to X/N$.
\end{Example}

\begin{nsc}
A relevant consequence of Proposition \ref{prop:Rel(n)}, is that, under the hypotheses of the proposition, one can remove the existential quantifier from the definition of normal monomorphism. This is achieved by the following statement.
\end{nsc}
\begin{Corollary}\label{cor:no_exists}
Let $\cC$ be a Mal'tsev regular category, with pushout of split monomorphisms. An arrow
$\xymatrix{N \ar[r]^n &X}$ is Bourn-normal \emph{(\emph{to} $\Rel(n)$)} if and only if the morphism
$$
\xymatrix{
\nabla N\ar[r]^-{\hat{n}}
&R(n_!(\nabla N))
}
$$
is a cartesian discrete fibration in $\EqRel(\cC)$.
\end{Corollary}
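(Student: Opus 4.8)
The plan is to derive this characterization directly from Proposition \ref{prop:Rel(n)} and the reformulation of Bourn-normality in terms of discrete (op)fibrations. Recall from the remark following Definition \ref{def:normal}, and from \ref{nsc:embedding_EqRel-RRel}, that $n$ is Bourn-normal to an equivalence relation $S$ on $X$ precisely when the morphism of equivalence relations $\nabla N \to S$ lying over $n$ is a discrete fibration (equivalently, since we are among equivalence relations, a discrete opfibration); and that the first pullback in diagram (\ref{diag:normal}) says exactly that $n^*(S) = \nabla N$, i.e.\ the lift is $V$-cartesian. So the statement to prove is: $n$ is Bourn-normal (to \emph{some} equivalence relation, hence by Proposition \ref{prop:Rel(n)} to $\Rel(n)$) if and only if the specific morphism $\hat{n}\colon \nabla N \to R(n_!(\nabla N)) = \Rel(n)$ is simultaneously a discrete fibration and $V$-cartesian.

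For the ``if'' direction, suppose $\hat{n}$ is a cartesian discrete fibration. Then $\Rel(n)$ is an equivalence relation (it lies in $\EqRel(\cC)$ by construction, since $\cC$ is Mal'tsev and $R$ lands in $\RRel(\cC) = \EqRel(\cC)$), and $\hat{n}$ being a discrete fibration over $n$ together with $\hat{n}$ being $V$-cartesian says exactly that $n^*(\Rel(n)) = \nabla N$ and the cartesian lift is a discrete opfibration — which is precisely the restated definition of $n$ being Bourn-normal to $\Rel(n)$. Hence $n$ is Bourn-normal. This direction is essentially a matter of unwinding definitions.

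For the ``only if'' direction, suppose $n$ is Bourn-normal. By Proposition \ref{prop:Rel(n)}, $n$ is then Bourn-normal to $\Rel(n)$ itself; that is, the canonical morphism of equivalence relations $\nabla N \to \Rel(n)$ over $n$ is a discrete fibration, and — since Bourn-normality requires the left-hand square of (\ref{diag:normal}) to be a pullback — this morphism is also $V$-cartesian. It remains only to observe that the morphism $\nabla N \to \Rel(n)$ over $n$ that witnesses Bourn-normality is forced to coincide with $\hat{n}$: indeed, $\hat{n}$ is the $V$-opcartesian lift of $n$ relative to $\nabla N$, and by the universal property of opcartesian lifts any morphism out of $\nabla N$ over an arrow factoring through $n$ factors uniquely through $\hat{n}$; taking the arrow to be $n$ itself (so the second factor is an identity) and using that $\rho$ is a monomorphism (as in the proof of Proposition \ref{prop:Rel(n)}, via Lemma \ref{lemma:elementary}), the factorization is an isomorphism, so the witnessing morphism \emph{is} $\hat{n}$ up to this iso. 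Hence $\hat{n}$ is a cartesian discrete fibration.

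The only point requiring genuine care is the identification of the Bourn-normality-witnessing morphism with $\hat{n}$ in the ``only if'' direction — one must be sure that the $V$-cartesianness of $\nabla N \to \Rel(n)$ transports along the mono $\rho$ correctly, and that ``cartesian discrete fibration'' is read as the conjunction of the two bulleted conditions restated just before the Notation block in Section 4 (namely $n^*(\Rel(n)) = \nabla N$ together with the lift being a discrete opfibration), rather than as some a priori stronger notion. Once the dictionary between these fibrational conditions and the two pullback squares of (\ref{diag:normal}) is laid out explicitly, the corollary follows with no further computation; I would present it as a short paragraph of definition-chasing, citing Proposition \ref{prop:Rel(n)} for the substantive content.
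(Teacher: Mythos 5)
Your proposal is correct and follows the route the paper intends: the corollary is exactly the fibrational restatement of Bourn-normality (the two bullet points at the start of Section 4) applied to the specific morphism $\hat{n}$, with Proposition \ref{prop:Rel(n)} supplying the ``only if'' direction. One small simplification: the identification of the witnessing morphism $\nabla N\to\Rel(n)$ with $\hat{n}$ needs no appeal to the opcartesian universal property or to $\rho$ being monic, since a morphism of reflexive relations over a fixed arrow of $\cC$ is unique when it exists (the codomain being a subobject of a product), so the two morphisms coincide on the nose.
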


\begin{nsc}
We shall denote by $\EqRel_{\,0}(\cC)$ the essential image of the functor
$\Rel$, i.e.\ the full subcategory of $\EqRel(\cC)$ determined by those equivalence relations $S$ with $S\simeq\Rel(n)$, for some Bourn-normal monomorphism $n$.
We get a factorization $\Rel=L'\cdot \Rel_0$ through the embedding
$$
\xymatrix{
L'\colon \EqRel_{\,0}(\cC)\ \ar@{>->}[r]&\EqRel(\cC)
}
$$
\end{nsc}

\begin{Remark}
As it is clear from the construction in the proof of Proposition \ref{prop:Rel(n)}, the hypothesis that $\cC$ is a Mal'tsev category is considered only in order to collapse $\EqRel(\cC)$ with $\RRel(\cC)$.

We can actually remove this hypothesis, and let the functor $\Rel$ land in $\RRel(\cC)$. In this case, the notion of Bourn-normal subobject would be replaced by a new concept that, in the pointed regular case, is related with the notion of \emph{clot}, i.e.\ the normalization of an internal reflexive relation. For an algebra $X$ of a pointed (i.e. with only one constant) variety of universal algebras, and a subalgebra $(N,n)$ of $X$, $\Rel(n)$ is the subalgebra of $X\times X$ generated by $\mathsf{diag}(X)\cup (N\times N)$, where $\mathsf{diag}(X)$ is the diagonal of $X\times X$.

The reader may consult \cite{MM10a} (and the references therein) for a discussion on the different aspects of normality in pointed contexts.
\end{Remark}

\section{The big picture}

The discussion developed in the last section establishes, for a quasi-pointed regular Mal'tsev category $\cC$ with pushout of split monomorphisms along arbitrary morphisms, a pair of functors relating in both directions the category of internal equivalence relations $\EqRel(\cC)$ with the category of Bourn-normal monomorphisms $\mathsf{N}(\cC)$:
\begin{equation}\label{diag:not_so_an_adjunction}
\xymatrix@C=10ex{
\mathsf{N}(\cC)\ar@<-.6ex>[r]_-{\Rel}
&\EqRel(\cC)\ar@<-.6ex>[l]_-{\Nor}
}
\end{equation}

\begin{nsc}
Things thus standing, it is natural to investigate whether this pair of functors are related by an adjunction. Indeed, Corollary \ref{cor:norm_0-norm} would suggest to define a monomorphic natural transformation with components
$$
\xymatrix{\epsilon_n\colon \Nor(\Rel(n))\ \ar@{>->}[r]& n}
$$
for $n$ in $\mathsf{N}(\cC)$. On the other hand, by Proposition \ref{prop:Rel(n)}, one would define \emph{another} monomorphic natural transformation with components
$$
\xymatrix{\epsilon'_S\colon \Rel(\Nor(S))\ \ar@{>->}[r]& S}
$$
for $S$ in $\EqRel(\cC)$.

Both these transformations would be eligible as counits of an adjunction, but the directions do not match, since triangular identities are replaced by the two equations of the following lemma.
\end{nsc}

\begin{Lemma}\label{lemma:not_so_triangular}
Let $\cC$ be a quasi-pointed regular Mal'tsev category with pushout of split monomorphisms along arbitrary morphisms. The natural transformations $\epsilon$ and $\epsilon'$ defined above satisfy the following two equations:
\begin{equation}\label{eq:not_so_triangular}
\Rel(\epsilon_n)=\epsilon'_{\Rel(n)}\, , \qquad \Nor(\epsilon'_S)=\epsilon_{\Nor(S)}\, .
\end{equation}
Moreover, $\Nor(\epsilon'_S)$ is an isomorphism.
\end{Lemma}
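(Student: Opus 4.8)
The plan is to unwind the definitions of $\epsilon$ and $\epsilon'$ and use the universal properties of the opcartesian lifts involved. Recall that $\epsilon_n\colon\Nor(\Rel(n))\rightarrowtail n$ is the monomorphism provided by Corollary \ref{cor:norm_0-norm} applied to the fact (Proposition \ref{prop:Rel(n)}) that $n$ is Bourn-normal to $\Rel(n)$; explicitly, $\Nor(\Rel(n))=n\cdot k$ where $k=\ker(N\to 1)$, by Proposition \ref{prop:norm_0-norm}, and $\epsilon_n$ is exactly this comparison map $k\colon\Nor(\Rel(n))\to n$. On the other side, $\epsilon'_S\colon\Rel(\Nor(S))\rightarrowtail S$ is the monomorphism $\rho$ witnessing that $\Rel(\Nor(S))$ is the \emph{initial} equivalence relation to which $\Nor(S)$ is Bourn-normal (Proposition \ref{prop:rel_to_norm} tells us $\Nor(S)$ \emph{is} Bourn-normal to $S$, so such a $\rho$ exists and is unique).

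For the first equation, I would argue as follows. Both $\Rel(\epsilon_n)$ and $\epsilon'_{\Rel(n)}$ are morphisms in $\EqRel(\cC)$ with codomain $\Rel(n)$: the former by functoriality of $\Rel$ applied to $\epsilon_n\colon\Nor(\Rel(n))\to n$, and the latter is by definition a morphism $\Rel(\Nor(\Rel(n)))\to\Rel(n)$. So the two sides have the same domain and codomain; it remains to see they coincide. Since $\epsilon'_{\Rel(n)}$ is the \emph{unique} comparison morphism factoring the canonical discrete fibration structure through the opcartesian lift, it suffices to check that $\Rel(\epsilon_n)$ also serves as such a factorization, or — more directly — to use that both arrows lie over the identity of the relevant base object and that $\Rel$, being built from an opcartesian lift, sends the comparison map $\epsilon_n=k$ to the induced map on opcartesian lifts, which is precisely $\rho=\epsilon'_{\Rel(n)}$ by the universal property of opcartesianness. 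Concretely: $\epsilon_n$ exhibits $\Nor(\Rel(n))$ as a subobject of $X$ whose codiscrete relation maps via a discrete fibration into $\Rel(n)$, and applying $V$-opcartesian lifting turns this into exactly the initial such relation, i.e. $\epsilon'_{\Rel(n)}$.

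For the second equation, $\Nor(\epsilon'_S)\colon\Nor(\Rel(\Nor(S)))\to\Nor(S)$ and $\epsilon_{\Nor(S)}\colon\Nor(\Rel(\Nor(S)))\to\Nor(S)$ again share domain and codomain. Here I would invoke Corollary \ref{cor:norm_0-norm} directly: since $\Nor(S)$ has null support by construction, it lies in $\mathsf{N}_0(\cC)$, and the corollary says the comparison $\epsilon_{\Nor(S)}$ is then an \emph{isomorphism}. This already gives the final ``Moreover'' clause. To get the equality $\Nor(\epsilon'_S)=\epsilon_{\Nor(S)}$, observe that $\Nor(\epsilon'_S)$ is $\Nor$ applied to the monomorphism $\rho\colon\Rel(\Nor(S))\to S$; by Proposition \ref{prop:norm_0-norm}, $\Nor$ of any equivalence relation $R$ equipped with a Bourn-normal subobject $n'$ is computed as $n'\cdot k'$ with $k'=\ker(N'\to 1)$, and this construction is plainly compatible with the morphism $\rho$ of relations — chasing through, $\Nor(\rho)$ is the identity on the common normalization $\Nor(S)$, i.e. an isomorphism, and it agrees with $\epsilon_{\Nor(S)}$ by uniqueness of the comparison map provided by the subobject ordering.

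The main obstacle I anticipate is bookkeeping: keeping straight which map is ``the'' comparison morphism in each of the several subobject/initiality arguments, and verifying that functoriality of $\Rel$ and $\Nor$ really does transport these comparisons to one another rather than to merely parallel maps. The conceptual content is light — it all follows from the universal properties already established in Propositions \ref{prop:rel_to_norm}, \ref{prop:norm_0-norm}, \ref{prop:Rel(n)} and Corollary \ref{cor:norm_0-norm} — but one must be careful that the two candidate counits $\epsilon$ and $\epsilon'$ are genuinely natural and that the equations are equalities of specified arrows, not just of isomorphism classes.
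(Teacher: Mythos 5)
Your proposal is correct and follows essentially the same route as the paper: identifying $\epsilon_n$ with the kernel $k$ via Proposition \ref{prop:norm_0-norm}, deducing the first identity from the universal property of the $V$-opcartesian lift defining $\Rel$, obtaining the second from the uniqueness of comparison morphisms between Bourn-normal subobjects over the identity of $X$, and reading off the isomorphism claim from Corollary \ref{cor:norm_0-norm} since $\Nor(S)$ has null support. The only difference is presentational — the paper makes the first identity explicit with a cube diagram exhibiting $\Rel(\epsilon_n)$ as the comparison $\rho$, whereas you argue it more abstractly — but the underlying uniqueness arguments are the same.
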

\begin{proof}
Concerning the first equality, let us consider a Bourn-normal monomorphism $\xymatrix{N\ar[r]^n&X}$.  By Proposition \ref{prop:norm_0-norm}, $\epsilon_n$ is given by the kernel $k$ of the final map $\xymatrix{N\ar[r]&1}$:
$$
\epsilon_n\colon\begin{aligned}
\xymatrix{K\ar[r]^k\ar[dr]_{\Nor(\Rel(n))}&N\ar[d]^{n}\\&X}
\end{aligned}
$$
Then we compute $\Rel(\epsilon_n)$: its construction is represented by the right hand face in the following diagram:
$$
\xymatrix{
K\times K\ar[rr]^{\widehat{nk}} \ar@<-.5ex>[d]\ar@<+.5ex>[d]\ar[ddr]^(.25){k\times k}
&&R((n\!\cdot\!k)_{!}(\nabla K)) \ar@<-.5ex>[d]\ar@<+.5ex>[d]\ar[ddr]^{\Rel(\epsilon_n)}
\\
K\ar[rr]^{n\cdot k}|(.25)\hole \ar[ddr]_{k}
&&X\ar@{=}[ddr]|(.5)\hole
\\
&N\times N\ar[rr]^{\widehat{n}} \ar@<-.5ex>[d]\ar@<+.5ex>[d]
&&R(n_{!}(\nabla N)) \ar@<-.5ex>[d]\ar@<+.5ex>[d]
\\
&N\ar[rr]_{n}
&&X
}
$$
However, $\Rel(n)$ is given by the relation $R(n_{!}(\nabla N)) $ represented by the front face in the above diagram, so that the monomorphic comparison $\rho$ of Proposition \ref{prop:Rel(n)} (that defines $\epsilon'$) is precisely the right hand face in the diagram, and this concludes the proof of the first equality.

Concerning the second equality, it can be derived from the following facts: (a) for any Bourn-normal monomorphisms $n$ and $n'$ on the same object $X$, there exists at most one morphism in $\mathsf{N}(\cC)$ that gives the identity on $X$; (b) both $\Nor(\epsilon'_S)$ and $\epsilon_{\Nor(S)}$ determine such a morphism between Bourn-normal monomorphisms.
Finally, a direct computation shows that $\Nor(\epsilon'_S)=\epsilon_{\Nor(S)}$ is an isomorphism.

\end{proof}
The next theorem follows from Lemma \ref{lemma:not_so_triangular}, and it is the main result of this section.
\begin{Theorem}\label{thm:The_Theorem}
Let $\cC$ be a quasi-pointed regular Mal'tsev category with pushout of split monomorphisms along arbitrary morphisms, and let us consider the pair of functors $(\Rel,\Nor)$ as in diagram \emph{(\ref{diag:not_so_an_adjunction})}. Then
\begin{itemize}
\item the restriction to the essential image $\mathsf{N}_0(\cC)$ yields the monomorphic coreflection:
\begin{equation}\label{diag:the_adjunctions}
\xymatrix@C=8ex{
\mathsf{N}_0(\cC)\ar@<-1ex>[r]_-{\Rel}\ar@{}[r]|-{\top}
&\EqRel(\cC)\ar@<-1ex>[l]_-{\Nor}
}
\end{equation}
\item the further restriction to the essential image $\Rel(\mathsf{N}_0(\cC))$ yields the adjoint equivalence
\begin{equation}\label{diag:one_equivalence}
\xymatrix@C=10ex{
\mathsf{N}_0(\cC)\ar@<-1ex>[r]_-{\Rel}\ar@{}[r]|-{\simeq}
&\Rel(\mathsf{N}_0(\cC))\ar@<-1ex>[l]_-{\Nor}
}
\end{equation}

\end{itemize}
\end{Theorem}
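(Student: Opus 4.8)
The plan is to deduce both statements directly from Lemma \ref{lemma:not_so_triangular}, together with the two factorizations $\Nor = L\cdot\Nor_0$ and $\Rel = L'\cdot\Rel_0$ recorded just before the theorem. First I would check that the two functors $\Rel$ and $\Nor$ actually restrict as claimed. By Corollary \ref{cor:norm_0-norm}, for $n$ in $\mathsf{N}_0(\cC)$ the component $\epsilon_n\colon\Nor(\Rel(n))\to n$ is an isomorphism (equality of subobjects), so $\Nor$ does land in $\mathsf{N}_0(\cC)$ when we precompose with $\Rel$; more to the point, $\mathsf{N}_0(\cC)$ is by definition the essential image of $\Nor$, so $\Nor\colon\EqRel(\cC)\to\mathsf{N}_0(\cC)$ is well defined on the nose. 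And $\Rel$ restricted to $\mathsf{N}_0(\cC)$ obviously lands in $\EqRel(\cC)$. So diagram (\ref{diag:the_adjunctions}) at least typechecks.

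Next I would exhibit the adjunction $\Rel\dashv\Nor$ of (\ref{diag:the_adjunctions}). The unit will be the identity and the counit will be $\epsilon$. Concretely: for $n$ in $\mathsf{N}_0(\cC)$, the component $\epsilon_n\colon\Nor(\Rel(n))\to n$ is an isomorphism by Corollary \ref{cor:norm_0-norm} (since $n\in\mathsf{N}_0(\cC)$), hence $\Nor\Rel\cong\mathrm{id}_{\mathsf{N}_0(\cC)}$, which will serve as (the inverse of) the unit. For $S$ in $\EqRel(\cC)$ I take the counit to be $\epsilon'_S\colon\Rel(\Nor(S))\rightarrowtail S$, the monomorphic comparison of Proposition \ref{prop:Rel(n)}. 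The triangular identities then reduce exactly to the two equations of Lemma \ref{lemma:not_so_triangular}: the identity $\Nor(\epsilon'_S)\circ(\text{unit at }\Nor S)=\mathrm{id}$ follows because $\Nor(\epsilon'_S)=\epsilon_{\Nor(S)}$ and this is an isomorphism (the last clause of the lemma), while the identity $\epsilon'_{\Rel(n)}\circ\Rel(\text{unit at }n)=\mathrm{id}$ follows from $\Rel(\epsilon_n)=\epsilon'_{\Rel(n)}$ together with $\epsilon_n$ being invertible on $\mathsf{N}_0(\cC)$. Since both the unit and the counit-restricted-to-the-image are isomorphisms, and $\Rel$ is faithful on Bourn-normal monomorphisms (Proposition \ref{prop:Rel(n)}) and full on $\mathsf{N}_0(\cC)$ because morphisms between Bourn-normal monomorphisms over a fixed base are unique (fact (a) in the proof of Lemma \ref{lemma:not_so_triangular}), $\Rel$ is a full faithful left adjoint, i.e.\ the adjunction is a coreflection; the word "monomorphic" refers to $\epsilon'$ being a monomorphism.

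For the second bullet, I would simply restrict the codomain of $\Rel$ to its essential image $\Rel(\mathsf{N}_0(\cC))$. A full and faithful functor onto its essential image is an equivalence; since $\Rel\colon\mathsf{N}_0(\cC)\to\EqRel(\cC)$ is full and faithful by the previous paragraph, $\Rel\colon\mathsf{N}_0(\cC)\to\Rel(\mathsf{N}_0(\cC))$ is an equivalence, and $\Nor$ (restricted) is an inverse up to the isomorphisms $\epsilon$ and $\epsilon'$; on the subcategory $\Rel(\mathsf{N}_0(\cC))$ the counit $\epsilon'$ becomes an isomorphism since there $S\simeq\Rel(n)$ and then $\epsilon'_S\cong\epsilon'_{\Rel(n)}=\Rel(\epsilon_n)$ is invertible. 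Hence (\ref{diag:one_equivalence}) is an adjoint equivalence.

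The main obstacle I anticipate is not any hard construction but the bookkeeping of \emph{fullness}: one must be sure that every morphism in $\EqRel(\cC)$ (or $\mathsf{N}_0(\cC)$) over a given base map is hit by $\Rel$, which is where fact (a) from the proof of Lemma \ref{lemma:not_so_triangular} — uniqueness of morphisms of Bourn-normal monomorphisms over a fixed object — is essential, and one must also keep straight that "coreflection" here means $\Rel$ (the \emph{left} adjoint) is full and faithful, which is the slightly unusual orientation forced by the mismatch of directions discussed before the lemma. Everything else is a formal consequence of Lemma \ref{lemma:not_so_triangular} and Corollary \ref{cor:norm_0-norm}.
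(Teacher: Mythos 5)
Your proposal is correct and follows essentially the same route as the paper: the unit is $\epsilon^{-1}$ (invertible on $\mathsf{N}_0(\cC)$), the counit is $\epsilon'$, the triangle identities collapse to the two equations of Lemma \ref{lemma:not_so_triangular}, and the second bullet is the standard restriction of a fully faithful functor to its essential image. The only cosmetic difference is that you invoke Corollary \ref{cor:norm_0-norm} to see that $\epsilon_n$ is invertible on $\mathsf{N}_0(\cC)$, whereas the paper re-derives this via Proposition \ref{prop:ker} and a pullback-pasting argument; both are fine.
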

\begin{proof}
Using Proposition \ref{prop:ker}, one easily checks that $\epsilon_n=(k,1_X)$, where $k$ is given by the pullback:
$$
\xymatrix{
K\ar[d] \ar[r]^{k}\ar@{}[dr]|(.4){\lrcorner}
&N\ar[d]\ar[r]^n&X
\\0\ar[r]&1
}
$$
When $n\in \mathsf{N}_0(\cC)$, the pullback square above can be decomposed as shown below:
$$
\xymatrix{
K\ar[d] \ar[r]^{k}
&N\ar[d]
\\0\ar[r]\ar[d] \ar[r]&0\ar[d]
\\0\ar[r]&1
}
$$
and since the lower region of the diagram and the whole is a pullback, also the upper region is, so that $k$ is an isomorphism.
As a consequence, one can define $\eta'=\epsilon^{-1}$, so that the pair $(\eta',\epsilon')$ gives the unit and the counit of the adjunction $\Rel \dashv \Nor$ of the first statement of the proposition. Then one easily prove that the faithful functor $\Rel$ is also full.

The second statement amounts to the obvious fact that any fully faithful functor gives an equivalence when restricted to its essential image.
\end{proof}

\begin{nsc}
The following diagram summarizes the situation in the quasi-pointed Mal'tsev regular setting:
\end{nsc}
\begin{equation}\label{diag:the_big_picture}
\begin{aligned}
\xymatrix@=20ex{
\mathsf{N}_0(\cC)\ar@<-1ex>[r]_-{\Rel}\ar@{}[r]|-{\simeq}\ar[d]_{L}
\ar@<-1ex>[dr]_(.57){\Rel}\ar@{}[dr]|{\top}
&\Rel(\mathsf{N}_0(\cC))\ar@<-1ex>[l]_-{\Nor}\ar[d]^{L'}
\\
\mathsf{N}(\cC)\ar@<-.8ex>[r]_-{\Rel}
&\EqRel(\cC)\ar@<-.8ex>[l]_-{\Nor}
\ar@<-1ex>[ul]_(.57){\Nor}}
\end{aligned}
\end{equation}
The pair of functors $\Nor$ and $\Rel$ gives a comparison (in the sense of Lemma \ref{lemma:not_so_triangular}) between the category of Bourn-normal monomorphisms and that of equivalence relations. The inclusions $L$ and $L'$ restrict this comparison to an equivalence of categories.

In fact, the functor $L$ is related with the possible pointedness of $\cC$, while $L'$ with its protomodularity. More precisely, one can easily see that if $\cC$ is pointed, $L$ is the identity, and that if $\cC$ is protomodular,  $L'$ is the identity. Consequently, when $\cC$ is a pointed protomodular regular category, the equivalence above reduces to the one already studied by Bourn in \cite{Bo00b}.
Actually, in this case, it restricts further to the well known equivalence between normal monomorphisms and  effective equivalence relations (see also \cite{MM10a}). However the role of effective equivalence relations and normal monomorphisms is somehow more obscure in the more general quasi-pointed  Mal'tsev case.

\begin{Remark}
It is not difficult to show that the embedding
$$
\xymatrix{L\colon \mathsf{N}_0(\cC)\ \ar@{>->}[r] & \mathsf{N}(\cC)}
$$
presents $\mathsf{N}_0(\cC)$ as a mono-coreflective subcategory of $\mathsf{N}(\cC)$, with coreflection given by $J=\Nor\cdot\Rel$.
However, the functor $J$ could have been defined directly as follows. Consider the construction given below for the domain $N$ of a Bourn-normal monomorphism $(N,n)$:
\begin{equation}\label{diag:norm_to_0-norm}
\begin{aligned}
\xymatrix{
K\times K\ar[r]^-{k\times k}\ar@<-.5ex>[d]_{p_1}\ar@<+.5ex>[d]^{p_2}
&N\times N\ar@<-.5ex>[d]_{p_1}\ar@<+.5ex>[d]^{p_2}
\\
K\ar[r]_-{k}\ar[d]
&N\ar[d]
\\
0\ar[r]
&1
}
\end{aligned}
\end{equation}
where $k=\ker(N\to 1)$, and the two vertical forks are left exact, i.e.\ kernel pairs.
Therefore, the upper squares are pullbacks. By pasting those with the right hand diagram of
(\ref{diag:normal}), one gets that $n\cdot k$ is Bourn-normal (to the same equivalence relation(s) to which also
$n$ is Bourn-normal). Then, we recover $J(n)=n\cdot k$.

In this way, one can appreciate the adjunction $L\dashv J$ as a consequence of a more general adjunction $L''\dashv J''$
$$
\xymatrix@C=10ex{
\mathsf{N}_0(\cC)\ar@<-1ex>[d]_{L}\ar@{}[d]|-{\dashv}\ar[r]^{\text{dom}}
&\cC_0\ar@<-1ex>[d]_{L''}\ar@{}[d]|-{\dashv}
\\
\mathsf{N}(\cC)\ar[r]_{\text{dom}}\ar@<-1ex>[u]_{J}
&\cC\ar@<-1ex>[u]_{J''}}$$
where $J''(X)=\text{Ker}(X\to 1)$.
\end{Remark}

\section{Examples}
The following examples will help to clarify the possible scenarios that arise from the theory developed so far.

\begin{Example}\label{expl:Gp}
A first example is given by the category $\Gp$ of groups.
This category is semi-abelian, hence Barr-exact, pointed and protomodular. In $\Gp$,  all equivalence relations are effective, hence, by Proposition 10 in \cite{Bo00b}, all Bourn-normal monomorphisms are normal. In other words, diagram (\ref{diag:the_big_picture}) above reduces to the well known equivalence between normal monomorphisms and internal equivalence relations in groups (see Example \ref{expl:Rel_in_Gp} for the construction of the functor $\Rel$ in $\Gp$).
\end{Example}

Actually, the previous example is not very enlightening with respect to the general case, since relevant items that are in general distinguished  collapse in semi-abelian categories. However it has been included here since we shall present different generalizations of it in the examples that follow.

\begin{Example}\label{expl:GpTop}

In the second example, we aim to remove the Barr-exactness condition. Hence, we present the case $\Gp(\Top)$ of topological groups, since, as it has been proved in \cite{BC05}, $\Gp(\Top)$ is still homological, i.e.\  a (finitely complete) pointed, protomodular, regular category.

Likewise in the semi-abelian case, $L$ and $L'$ are identities, so that the pair $(\Nor, \Rel)$ establishes an adjoint equivalence
$$
\mathsf{N}(\Gp)\simeq\EqRel(\Gp)
$$
between Bourn-normal monomorphisms and internal equivalence relations. Moreover, such an equivalence  restricts to normal monomorphisms and effective equivalence relations:
$$
\mathsf{K}(\Gp)\simeq\EffRel(\Gp)\,.
$$
Indeed, in the category of topological groups, a Bourn-normal subobject
$$\xymatrix{N\ar[r]^{n}&X}$$
is normal precisely when it is an algebraic kernel (i.e.\ a normal monomorphism in $\Gp$) endowed with the induced topology. Then one can see that, given a Bourn-normal subobject $n:N\to X$, the equivalence relation $\Rel(n)$ is effective precisely when $n$ is normal.
\end{Example}

\begin{Example}\label{expl:GpdX}
Now we analyze a situation that, with respect to the case of groups, keeps the Barr-exactness but not the pointedness. To this end, let us consider $\Gpd_S$, i.e.\ the category of groupoids over a fixed set of objects $S$ and constant-on-object functors between them. In other words, this is the fibre over $S$ of the \emph{functor of objects} $\Gpd\to \Set$.

The case of groups is recovered by taking as $S$ the final object:
$$\Gpd_1\simeq \Gp\,,$$
However, although $\Gpd_1$ is a pointed category, $\Gpd_S$ is in general only quasi-pointed, with initial maps given by the inclusion of the set $S$ of objects, considered as a discrete groupoid:
$$
\Delta S\to \bX\,.
$$
Notice that in $\Gpd_S$,  objects with null support are precisely those endowed with a (necessarily unique) constant-on-objects functor to the discrete groupoid $\Delta S$, i.e.\ the totally disconnected groupoids over $S$.

The Bourn-normal subobjects in $\Gpd_S$ are characterized in \cite{Bo02}: given a groupoid $\bX$ as above, a subobject $n\colon \bN\to \bX$ (in $\Gpd_S$) is Bourn-normal if for every oarrow $\alpha\colon y\to y$ in $\bN$ and every arrow $f\colon x\to y$, the \emph{conjugate} $f^{-1}\alpha f$ is in $\bN$.
Then, by Proposition \ref{prop:ker}, normal monomorphisms are precisely the totally disconnected Bourn-normal subobjects. Moreover, given a Bourn-normal subobject $(\bN,n)$, its associated normal subobject $\Nor(\Rel(n))$ is the kernel of $\bN\to \nabla S$, i.e.\ the maximal disconnected subgroupoid of $\bN$.

The category $\Gpd_S$ is protomodular (see \cite{Bo91} for a proof), so that the inclusion $L'$ of diagram (\ref{diag:the_big_picture}) is an identity, and the pair $(\Nor,\Rel)$ establishes an equivalence of categories $\mathsf{N}_0(\cC)\simeq \EqRel(\cC)$. In fact, by Barr-exactness, this is an equivalence between normal monomorphisms and effective equivalence relations.

In conclusion, for this quasi-pointed protomodular case, we observe that given a Bourn-normal subobject, it is Bourn-normal to a unique equivalence relation, while on the other side, a given equivalence relation may have several Bourn-normal subobjects associated with it, but only one with null support, which is contained in every other subobject which is Bourn-normal to the equivalence relation.
\end{Example}

I am indebted with the anonymous referee from whom I learnt about the next example.
\begin{Example}\label{expl:Gp0} As the last example, we analyze a quasi-pointed Mal'tsev variety. Let $\Gp^{\circ}$ denote the variety of universal algebras with binary operation $x\cdot y$ and unary operation $x^{-1}$ such that $\cdot$ is associative, $x\cdot x^{-1}=y\cdot y^{-1}$, and $x\cdot x\cdot x^{-1}=x\cdot x^{-1}\cdot x=x$.
It is easy to see that a nonempty algebra $X$ of $\Gp^{\circ}$ is necessarily a group, with identity given by $x\cdot x^{-1}$, and that $\Gp^{\circ}$ is obtained from the variety of groups by adding the empty set as an object and empty maps as morphisms. Being a variety, $\Gp^{\circ}$ is certainly Barr-exact, and the inclusion map of the initial algebra (the empty set) in the final algebra (the singleton) makes it quasi-pointed. In fact, it is also Mal'tsev, since the empty relation on the empty set is clearly an equivalence relation, and all other reflexive relations are reflexive relations of groups, hence equivalence relations. However, it is  not protomodular, since pulling back along the initial maps does not reflect, in general, an isomorphism of points.

Nonempty Bourn-normal subobjects in $\Gp^{\circ}$ are precisely the normal monomorphisms in $\Gp$, but they are not normal in $\Gp^{\circ}$. In $\Gp^{\circ}$, with any equivalence relation is always associated an empty normal subobject, since only the empty set has null support. Hence, in $\Gp^{\circ}$, all nonempty equivalence relations admit precisely two Bourn-normal subobjects: their normalization in the category of groups and an empty subalgebra. In fact, fixed an algebra $X$, the only possible normal subobject lying in $X$ is the initial arrow (the empty subalgebra), and it can be obtained as the $\Nor$malization of any equivalence relation one can define on $X$. Therefore, the diagonal adjunction in diagram \ref{diag:the_big_picture} is definitely not an equivalence, in general.
Finally, $\mathsf{N}_0(\cC)$ and $\Rel(\mathsf{N}_0(\cC))$ are trivial, i.e.\ they can be identified with the base category $\cC$.

\end{Example}

Other examples can be obtained by considering internal versions of Example \ref{expl:GpdX}, where instead of $\Set$ one can consider other well-behaved categories. This is the case, for instance, of $\Gpd_S(\cE)$, for a given object $S$ of the Barr-exact category $\cE$. Similarly one can consider a regular category $\cE$ such as for instance $\Gp(\Top)$ thus obtaining topological models of internal groupoids in groups.

Also Example \ref{expl:Gp0} can be extended by considering its topological models, $\Gp^{\circ}(\Top)$. This yields a non-protomodular Mal'tsev category which is not Barr-exact, but still regular.

\section*{Acknowledgments}
I would like to thank Sandra Mantovani and Alan Cigoli for many interesting conversations and suggestions on the subject. Moreover, I would like to thank the anonymous referee whose work has been precious for correcting and improving the first version of this article. This work was partially supported by I.N.D.A.M.\ - Gruppo Nazionale per le Strutture Algebriche e Geometriche e le loro Applicazioni, and by the Fonds de la Recherche Scientifique -  F.N.R.S.: 2015/V 6/5/005 - IB/JN - 16385.

\end{document}